\title{Matching Cut and Variants on Bipartite Graphs of Bounded Radius and Diameter}
\titlerunning{Matching Cut and Variants on Bipartite Graphs} 
\author{Felicia Lucke}{Department of Computer Science, Durham University, Durham, United Kingdom}{felicia.c.lucke@durham.ac.uk}{https://orcid.org/0000-0002-9860-2928}{Supported by the EPSRC (Grant No.\ EP/X01357X/1)}
\authorrunning{F.\,Lucke } 
\keywords{matching cut; bipartite graph; diameter; radius; dichotomy} 
\definecolor{nicered}{RGB}{204,0,0}
\definecolor{lightblue}{RGB}{153,204,255}
\definecolor{nicegreen}{RGB}{0,153,0}
 \tikzstyle{vertex}=[thin,circle,inner sep=0.cm, minimum size=1.7mm, fill=black, draw=black]
 \tikzstyle{svertex}=[thin,circle,inner sep=0.cm, minimum size=1.3mm, fill=black, draw=black]
 \tikzstyle{bvertex}=[thin,circle,inner sep=0.cm, minimum size=1.7mm, fill=lightblue, draw=lightblue]
 \tikzstyle{rvertex}=[thin,circle,inner sep=0.cm, minimum size=1.7mm, fill=nicered,draw=nicered]
 \tikzstyle{evertex}=[thin,circle,inner sep=0.cm, minimum size=1.7mm, fill=none,draw=black]
 \tikzstyle{hedge}=[thick, draw = gray]
 \tikzstyle{edge}=[thick, draw = gray]
 \tikzstyle{tedge}=[ultra thick, draw = black]
 \tikzstyle{tredge}=[ultra thick, draw = nicered]
 \tikzstyle{pedge}=[ultra thick, draw=lightblue]
 \tikzstyle{rededge}=[thick, draw = nicered]
 \tikzstyle{bluedge}=[thick, draw = lightblue]
 \tikzstyle{grnedge}=[thick, draw = nicegreen]
 \tikzstyle{edge}=[thick, draw = gray]
 \tikzstyle{br} = [decorate, ultra thick, decoration = {calligraphic brace}]
 \tikzstyle{wiggly} = [decorate, decoration = snake, thick, draw = gray,]
\newcommand\yes{\textsc{Yes}}
\newcommand{\set}[1]{\ensuremath{ \left\lbrace #1 \right\rbrace }}
\newcommand{\NP}{\textsf{NP}}
\newcommand{\p}{\textsf{P}}
\DeclareMathOperator{\dist}{{dist}}
\newcommand{\mc}{{\sc Matching Cut}}
\newcommand{\pmc}{{\sc Perfect Matching Cut}}
\newcommand{\dpm}{{\sc Disconnected Perfect Matching}}
\newcommand{\mmc}{{\sc Maximum Matching Cut}}
\newcommand{\minmc}{{\sc Minimum Matching Cut}}
\newcommand{\mdpm}{{\sc Maximum Disconnected Perfect Matching}}
\newcommand{\dc}{{\sc $d$-Cut}}
\newcommand{\naesat}{{\sc Not-All-Equal SAT}}
\newcommand{\tabref}[1]{{\footnotesize (Th.~\ref{#1})}}
\newcommand{\tabcite}[1]{{ \footnotesize (\cite{#1})}}
 \newtheorem{open}{Open Problem} 
\begin{document}

\maketitle

\begin{abstract}
In the \mc{} problem we ask whether a graph $G$ has a matching cut, that is, a matching which is also an edge cut of $G$. 
We consider the variants \pmc{} and \dpm{} where we ask whether there exists a matching cut equal to, respectively contained in, a perfect matching.
Further, in the problem \mmc{} we ask for a matching cut with a maximum number of edges. 
The last problem we consider is \dc{} where we ask for an edge cut where each vertex is incident to at most $d$ edges in the cut.

We investigate the computational complexity of these problems on bipartite graphs of bounded radius and diameter. Our results extend known results for \mc{} and \dpm{}. We give complexity dichotomies for \dc{} and \mmc{} and solve one of two open cases for \dpm. For \pmc{} we give the first hardness result for bipartite graphs of bounded radius and diameter and extend the known polynomial cases.

\end{abstract}

\section{Introduction}\label{s-intro}

Given a graph $G$, a \emph{matching} $M \subseteq E(G)$ is a set of edges which are pairwise non-adjacent. We say that a matching $M$ is a \emph{matching cut} of $G$ if there is a partition $A \cup B = V(G)$ of the vertices such that the edges in $M$ are exactly those edges of $G$ with one endvertex in $A$ and one in $B$.

The corresponding decision problem \mc, where we ask whether a given graph has a
matching cut, was introduced in 1970 by Graham~\cite{Gr70}.  In 1984,
Chv{\'a}tal showed that \mc{} is an \NP-complete problem. Since then,
% both polynomial time algorithms and \NP-completeness results were published for many different graph classes. Further, there are exact and parameterized algorithms. We refer to \cite{LPR24} for an overview over known results.
many different variants of the problem have been introduced. 
Recall that a \emph{perfect matching} is a matching where every vertex is adjacent to an edge in the matching.
A \emph{perfect matching cut} is a perfect matching which is also a matching cut. 
In the problem \pmc{}, introduced in 1998~\cite{HT98}, we ask for the existence
of a perfect matching cut. 
We further say that a matching cut which is contained in a perfect matching is a
\emph{disconnected perfect matching}. In the corresponding decision problem \dpm,
introduced in~\cite{BP} under the name perfect matching cut, we ask for the existence of a disconnected perfect
matching.

Recently, the maximization variants of \mc{} and \dpm{} \cite{LPR24} as well as the minimization variant of \mc{} \cite{LLPR24} have been introduced. 
The \emph{size} of a matching cut is the number of edges contained in the
matching cut.
In the problems \mmc{} and \mdpm{} we ask about the maximum size of a matching
cut, respectively disconnected perfect matching of a given graph.
Similarly, in the problem \minmc{} we ask for the minimum size of a matching cut of a given graph.
%FL:\todo{add either survey or minmc paper, maybe dont introduce if no result here.}

The last problem we consider is a generalization of \mc{}, which is called \dc.
A graph has a \emph{$d$-cut} if there is a partition of the vertex set into sets $A$ and $B$ such that every vertex in $A$ is adjacent to at most $d$ vertices in $B$ and vice versa. The decision problem \dc{} asks whether a given graph has a $d$-cut.

Given that all these problems are \NP-hard, their computational complexity was
studied on different graph classes. Among them are for example $H$-free
graphs~\cite{BP, FLPR23, LL19, LT22, LPR23a}, chordal graphs~\cite{LL23}, graphs of bounded maximum degree \cite{BP, Ch84, GS21}, planar graphs \cite{BCD24, Bo09, BP} and graphs
of bounded radius and diameter. Further, there are exact and parameterized results. See~\cite{FLPR23} for a more detailed overview.

\begin{table}
\centering
\begin{tabular}{| l | cccc |}

\hline
\rule{0pt}{5.5mm}
& \multirow{2}*{Matching Cut}  &Disconnected&  Perfect &\multirow{2}*{$d$-Cut ($d\geq 2$)} \\
& & Perfect Matching &   Matching Cut   & \\[2mm]
\hline
\rule{0pt}{5.5mm}
\p & diam $\leq 3$ \tabcite{LL19}&  diam $\leq 3$ \tabcite{BP} &diam $\leq 3$ \tabref{t-pmc-bipdiam3}  & diam $\leq 3$ \tabref{t-dc-dc-bipdiam3} \\
%\hline
\NP & diam $\geq 4$ \tabcite{LL19}& diam $\geq 4$ \tabcite{BP} & diam $\geq 8$ \tabref{t-pmc-biprad4} & diam $\geq 4$  \tabref{t-dc-dc-biprad3}  \\[2mm]
%\vspace*{10pt}
\hline
\rule{0pt}{5.5mm}
\p & radius $\leq 2$ \tabcite{LPR22} & radius $\leq 2$ (Cor.~\ref{c-p-dpm-biprad}) & 
 radius $\leq 2$ \tabcite{LPR23a}   &  radius $\leq 2$ \tabref{t-p-dc-biprad2}\\
%\hline
\NP& radius $\geq 4$ \tabcite{LL19}& radius $\geq 4$ \tabcite{BP} & radius $\geq 4$ \tabref{t-pmc-biprad4}  & radius $\geq 3$ \tabref{t-dc-dc-biprad3}  \\[1.5mm]
\hline
\hline
\rule{0pt}{5.5mm}
& Minimum &         Maximum & Maximum Disconnected & \\
& Matching Cut & Matching Cut & Perfect Matching & \\[2mm]
\hline
\rule{0pt}{5.5mm}
\p & & diam $\leq 3$ \tabref{t-p-mmc-bipdiam3} & diam $\leq 3$ \tabref{t-p-mdpm-bipdiam3} & \\
%\hline
\NP & diam $\geq 4$ \tabcite{LL19} & diam $\geq 4$ \tabref{t-np-mmc-biprad}  & diam $\geq 4$ \tabref{t-np-mdpm-biprad} &  \\[2mm]
%\vspace*{10pt}
\hline
\rule{0pt}{5.5mm}
\p   & & radius $\leq 2$ \tabref{t-p-mmc-biprad} & radius $\leq 2$ \tabref{t-p-mdpm-biprad} &  \\
%\hline
\NP & radius $\geq 4$ \tabcite{LL19} & radius $\geq 3$ \tabref{t-np-mmc-biprad} & radius $\geq 3$ \tabref{t-np-mdpm-biprad} & \\[1.5mm]
\hline

\end{tabular}
\caption{\label{tab-intro-all-biprad}The complexity of all problems for bipartite graphs of bounded diameter (diam) and radius.}
\end{table}

The \emph{distance} of two vertices $u$ and $v$, denoted $\dist(u,v)$ is the length of a shortest path from $u$ to $v$. A graph has \emph{radius} $r$ if there is a vertex $v$ such that $\dist(v,u) \leq r$ for all $u \in V$. We call such a vertex a \emph{center vertex}. A graph has \emph{diameter} $d$ if for all vertices $u,v$ we have that $\dist(u,v) \leq d$. A graph is called \emph{bipartite} if there is a partition of the vertex set into sets $A$ and $B$ such that all edges of $G$ have one endpoint in $A$ and one in $B$.

While for all matching cut variants the computational complexity on graphs of bounded radius
and diameter has been studied, for bipartite graphs of bounded radius and
diameter, this was previously done only for \mc{} and \dpm{}.
Both problems are known to be 
\NP-complete for general graphs of radius~$3$ and diameter~$3$~\cite{BP, LL19}.
For bipartite graphs of diameter~$3$, it has been shown that \mc{}~\cite{LL19} and \dpm{}~\cite{BP} are polynomial-time solvable and \NP-complete for bipartite graphs of diameter~$4$, and thus of radius~$4$.
Note that the hardness results for \mc{} translate to \minmc{} and \mmc{} and those for \dpm{} translate to \mdpm.
It follows from the fact that \mc{} is solvable for graphs of radius~$2$~\cite{LPR22} that the same holds for bipartite graphs.
This leaves only the case of radius~$3$ open for \mc{} and the cases of radius~$2$ and $3$ for \dpm{}.

In this paper we continue the investigation by solving one of the open cases for \dpm{}.
For \dc, \mmc{} and \mdpm{} we give the complexity dichotomies for bipartite graphs of bounded radius and bounded diameter. We further give a polynomial time algorithm for \pmc{} for bipartite graphs of diameter~$3$ and we show that \pmc{} is \NP-complete for graphs of radius~$4$.
See Table~\ref{tab-intro-all-biprad} for an overview of known and new results.

In Section~\ref{s-pre} we present a transformation of the aforementioned problems into colouring problems, see also~\cite{CHLLP21, LPR23a}. This transformation will be used throughout the paper. Then, in Section~\ref{s-poly} we give our
polynomial time results which are followed by the \NP-hardness results in
Section~\ref{s-hard}.  We end our paper with some open questions.

\section{Preliminaries}\label{s-pre}
Let $G = (V,E)$ be a graph. 
The \emph{neighbourhood} of a vertex $v \in V$ is denoted by $N(v)$.
Let $Z \subseteq V$. We denote the graph \emph{induced} by $Z$ by $G[Z]$.
For integers $r,s \geq 1$, the complete graph on $r$ vertices is denoted $K_r$ and the complete bipartite graph with partition classes of size $r$ and $s$ is denoted by $K_{r,s}$. We call a complete bipartite graph a \emph{biclique}.

A \emph{red-blue colouring} of $G$ is a vertex colouring with two colours, red and blue, using every colour at least once. We say that a red-blue colouring is \emph{valid} if every vertex is adjacent to at most one vertex of the other colour, see Figure~\ref{f-example-colouring} for an example.
We call an edge with one red and one blue endvertex \emph{bichromatic} and a
vertex set where all vertices have the same colour is \emph{monochromatic}.
Further, we call a valid red-blue colouring \emph{perfect} if every vertex is adjacent to exactly one vertex of the other colour and \emph{perfect extendable} if there is a perfect matching~$M$ of $G$ such that every bichromatic edge is contained in $M$.

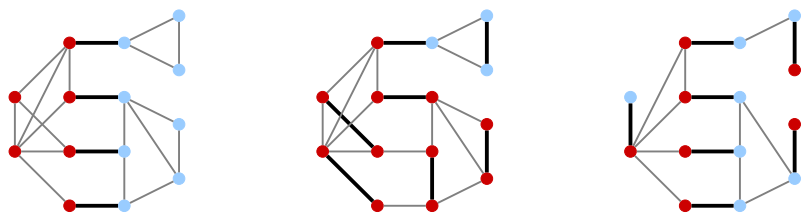
\begin{figure}
\centering
\scalebox{0.9}{
\begin{tikzpicture}

 \def\k{0.8}

	\node[rvertex] (a1) at (0,\k/2){};
	\node[rvertex] (a2) at (0,-\k/2){};
	\node[rvertex] (a3) at (\k,1.5*\k){};
	\node[rvertex] (a4) at (\k,\k/2){};
	\node[rvertex] (a5) at (\k,-3*\k/2){};
	\node[rvertex] (a5a) at (\k,-\k/2){};
	\node[bvertex] (a6) at (2*\k,1.5*\k){};
	\node[bvertex] (a7) at (2*\k,0.5*\k){};
	\node[bvertex] (a8a) at (2*\k,-0.5*\k){};
	\node[bvertex] (a8) at (2*\k,-1.5*\k){};
	\node[bvertex] (a9) at (3*\k,2*\k){};
	\node[bvertex] (a10) at (3*\k,1*\k){};
	\node[bvertex] (a11) at (3*\k,0*\k){};
	\node[bvertex] (a12) at (3*\k,-1*\k){};
	
	\draw[edge](a1)--(a2);
	\draw[edge](a1)--(a3);
	\draw[edge](a1)--(a5a);
	\draw[edge](a2)--(a3);
	\draw[edge](a2)--(a4);
	\draw[edge](a2)--(a5);
	\draw[edge](a2)--(a5a);
	\draw[edge](a3)--(a4);
	\draw[tedge](a3)--(a6);
	\draw[tedge](a4)--(a7);
	\draw[tedge](a5)--(a8);
	\draw[tedge](a5a)--(a8a);
	\draw[edge](a6)--(a9);
	\draw[edge](a6)--(a10);
	\draw[edge](a7)--(a8a);
	\draw[edge](a7)--(a11);
	\draw[edge](a7)--(a12);
	\draw[edge](a8)--(a12);
	\draw[edge](a8a)--(a8);
	\draw[edge](a9)--(a10);
	\draw[edge](a11)--(a12);

	\begin{scope}[shift= {(4.5,0)}]
	\node[rvertex] (a1) at (0,\k/2){};
	\node[rvertex] (a2) at (0,-\k/2){};
	\node[rvertex] (a3) at (\k,1.5*\k){};
	\node[rvertex] (a4) at (\k,\k/2){};
	\node[rvertex] (a5) at (\k,-3*\k/2){};
	\node[rvertex] (a5a) at (\k,-\k/2){};
	\node[bvertex] (a6) at (2*\k,1.5*\k){};
	\node[rvertex] (a7) at (2*\k,0.5*\k){};
	\node[rvertex] (a8a) at (2*\k,-0.5*\k){};
	\node[rvertex] (a8) at (2*\k,-1.5*\k){};
	\node[bvertex] (a9) at (3*\k,2*\k){};
	\node[bvertex] (a10) at (3*\k,1*\k){};
	\node[rvertex] (a11) at (3*\k,0*\k){};
	\node[rvertex] (a12) at (3*\k,-1*\k){};
	
	\draw[edge](a1)--(a2);
	\draw[edge](a1)--(a3);
	\draw[tedge](a1)--(a5a);
	\draw[edge](a2)--(a3);
	\draw[edge](a2)--(a4);
	\draw[tedge](a2)--(a5);
	\draw[edge](a2)--(a5a);
	\draw[edge](a3)--(a4);
	\draw[tedge](a3)--(a6);
	\draw[tedge](a4)--(a7);
	\draw[edge](a5)--(a8);
	\draw[edge](a5a)--(a8a);
	\draw[edge](a6)--(a9);
	\draw[edge](a6)--(a10);
	\draw[edge](a7)--(a8a);
	\draw[edge](a7)--(a11);
	\draw[edge](a7)--(a12);
	\draw[edge](a8)--(a12);
	\draw[tedge](a8a)--(a8);
	\draw[tedge](a9)--(a10);
	\draw[tedge](a11)--(a12);
	
	\end{scope}

\begin{scope}[shift = {(9,0)}]
	\node[bvertex] (a1) at (0,\k/2){};
	\node[rvertex] (a2) at (0,-\k/2){};
	\node[rvertex] (a3) at (\k,1.5*\k){};
	\node[rvertex] (a4) at (\k,\k/2){};
	\node[rvertex] (a5) at (\k,-3*\k/2){};
	\node[rvertex] (a5a) at (\k,-\k/2){};
	\node[bvertex] (a6) at (2*\k,1.5*\k){};
	\node[bvertex] (a7) at (2*\k,0.5*\k){};
	\node[bvertex] (a8a) at (2*\k,-0.5*\k){};
	\node[bvertex] (a8) at (2*\k,-1.5*\k){};
	\node[bvertex] (a9) at (3*\k,2*\k){};
	\node[rvertex] (a10) at (3*\k,1*\k){};
	\node[rvertex] (a11) at (3*\k,0*\k){};
	\node[bvertex] (a12) at (3*\k,-1*\k){};
	
	\draw[tedge](a1)--(a2);
	%\draw[edge](a1)--(a3);
	%\draw[edge](a1)--(a5a);
	\draw[edge](a2)--(a3);
	\draw[edge](a2)--(a4);
	\draw[edge](a2)--(a5);
	\draw[edge](a2)--(a5a);
	\draw[edge](a3)--(a4);
	\draw[tedge](a3)--(a6);
	\draw[tedge](a4)--(a7);
	\draw[tedge](a5)--(a8);
	\draw[tedge](a5a)--(a8a);
	\draw[edge](a6)--(a9);
	%\draw[edge](a6)--(a10);
	\draw[edge](a7)--(a8a);
	%\draw[edge](a7)--(a11);
	\draw[edge](a7)--(a12);
	\draw[edge](a8)--(a12);
	\draw[edge](a8a)--(a8);
	\draw[tedge](a9)--(a10);
	\draw[tedge](a11)--(a12);
	\end{scope}

\end{tikzpicture}}
\caption{\label{f-example-colouring} A valid red-blue colouring of value~$4$ (left), a perfect extendable red-blue colouring of value~$1$ (middle) and a perfect red-blue colouring (right).}
\end{figure}

The \emph{value} $\nu$ of a red-blue colouring is the number of bichromatic edges. A red-blue colouring with maximum (respectively minimum) value is a \emph{maximum valid red-blue colouring} (respectively \emph{minimum valid red-blue colouring}).
A red-blue colouring is a \emph{red-blue $d$-colouring} if every vertex is adjacent to at most $d$ vertices of the other colour.
We make the following straightforward observations, which has been used in previous works, see e.g.~\cite{LPR23a}.

\begin{observation}\label{o-cut-colouring}
Let $G$ be a graph. Then the following holds.
\begin{itemize}
\item $G$ has a matching cut of size $\nu$ if and only if $G$ has a valid red-blue colouring of value $\nu$.
\item $G$ has a disconnected perfect matching of size $\nu$ if and only if $G$ has a perfect extendable red-blue colouring of value~$\nu$.
\item $G$ has a perfect matching cut if and only if $G$ has a perfect red-blue colouring.
\item $G$ has a $d$-cut, for $d \geq 1$, if and only if $G$ has a red-blue $d$-colouring.
\end{itemize}
\end{observation}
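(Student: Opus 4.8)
The plan is to establish all four equivalences through a single, uniform correspondence between red-blue colourings and vertex partitions of $G$. Given a red-blue colouring, let $A$ be the set of red vertices and $B$ the set of blue vertices; conversely, given a partition $V(G) = A \cup B$, colour the vertices of $A$ red and those of $B$ blue. Since a red-blue colouring uses each colour at least once, $A$ and $B$ are both nonempty, which matches the requirement that the partitions arising in all four problems have two nonempty sides. Under this correspondence, the bichromatic edges are exactly the edges of $G$ with one endvertex in $A$ and one in $B$, that is, the edge cut defined by the partition; in particular, the value $\nu$ of the colouring equals the number of edges in this cut.

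With the correspondence in hand, each bullet reduces to translating a local condition. For the first bullet, a vertex $v$ is adjacent to at most one vertex of the other colour if and only if at most one bichromatic edge is incident to $v$ (here we use that $G$ is simple); hence a colouring is valid if and only if its set of bichromatic edges is a matching, so valid red-blue colourings of value $\nu$ correspond precisely to matching cuts of size $\nu$. For the third bullet, replacing ``at most one'' by ``exactly one'' shows that a colouring is perfect if and only if its bichromatic edges form a perfect matching that is simultaneously an edge cut, i.e.\ a perfect matching cut. For the fourth bullet, ``every vertex is adjacent to at most $d$ vertices of the other colour'' is, word for word, the condition defining a $d$-cut, so red-blue $d$-colourings correspond exactly to $d$-cuts.

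The second bullet follows by combining the first with the remaining defining clause on each side: by the first bullet the bichromatic edges of a valid colouring form a matching cut, and ``there is a perfect matching of $G$ containing every bichromatic edge'' (the definition of perfect extendable) is exactly the condition that this matching cut be contained in a perfect matching, i.e.\ be a disconnected perfect matching; sizes are preserved throughout. There is no real obstacle here: the only points requiring a little care are keeping the nonemptiness conventions aligned between ``partition with two nonempty parts'' and ``colouring using both colours'', and observing that for a simple graph ``adjacent to $k$ vertices of the other colour'' and ``incident to $k$ bichromatic edges'' coincide. I would state the correspondence once at the start of the proof and then verify the four pairs of conditions as above.
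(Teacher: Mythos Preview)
Your proposal is correct. The paper does not actually give a proof of this observation: it is stated as a ``straightforward observation'' that ``has been used in previous works'' with a reference, so there is nothing to compare against beyond noting that your argument supplies exactly the routine verification the paper chose to omit.
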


\begin{observation}\label{o-c-all-kr-mono}
For every $d\geq 1$, every complete graph $K_{r}$ with $r \geq 2d+1$ and every complete bipartite graph $K_{r,s}$ with $\min\{r,s\} \geq 2d$ and $\max\{r,s\} \geq 2d+1$ is monochromatic in every red-blue $d$-colouring.
\end{observation}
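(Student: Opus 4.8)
The plan is a short case analysis that uses nothing beyond the defining property of a red-blue $d$-colouring: every vertex has at most $d$ neighbours of the opposite colour. So I would fix such a colouring of $G$; if $H$ is the complete or complete bipartite subgraph in question, then since every edge of $H$ is an edge of $G$, this degree bound in particular restricts the neighbours a vertex of $H$ has \emph{inside} $H$.

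First I would treat $K_r$ with $r \geq 2d+1$. Suppose its vertex set is not monochromatic, say it has $a \geq 1$ red and $b \geq 1$ blue vertices with $a+b = r \geq 2d+1$; then $\max\{a,b\} \geq d+1$, and without loss of generality $a \geq d+1$. Any blue vertex of $K_r$ is adjacent to all $a \geq d+1$ red vertices, contradicting the $d$-colouring property. For $K_{r,s}$ with $\min\{r,s\} \geq 2d$ and $\max\{r,s\} \geq 2d+1$, let $X$ and $Y$ be its two sides with $|X| = \max\{r,s\} \geq 2d+1$ and $|Y| = \min\{r,s\} \geq 2d$; since $d \geq 1$ we have $2d \geq d+1$, so both sides have at least $d+1$ vertices. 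Assume $K_{r,s}$ is not monochromatic. If $X$ is monochromatic, say red, then $Y$ must contain a blue vertex, which is adjacent to all $|X| \geq d+1$ red vertices of $X$ — a contradiction (the case ``$X$ blue'' is symmetric). If instead $X$ is not monochromatic, then, as in the $K_r$ argument, one colour, say red, occurs on at least $d+1$ vertices of $X$; hence no vertex of $Y$ can be blue, so $Y$ is entirely red, and then any blue vertex of $X$ (which exists since $X$ is not monochromatic) is adjacent to all $|Y| \geq d+1$ red vertices of $Y$ — again a contradiction.

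I do not expect a real obstacle here. The only points that need a moment's care are that $\max\{r,s\} \geq 2d+1$ forces a majority colour of size $\geq d+1$ on any non-monochromatic side, that the bound $\min\{r,s\} \geq 2d$ (hence $\geq d+1$) is exactly what is needed to close the last case, and that both facts rely on $d \geq 1$; all three are immediate.
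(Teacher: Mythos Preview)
Your argument is correct. The paper states this result as an observation without proof, so there is no paper proof to compare against; your short case analysis is exactly the kind of routine verification the authors are implicitly leaving to the reader.
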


\noindent
%In the following, instead of using the more complicated framework from \cite{LPR23a} for \pmc, we stick to the easier notions from~\cite{LMPS24} for \dc.
Let $S, T \subseteq V$ be two disjoint vertex sets. We say that a \emph{red-blue $(S,T)$-$d$-colouring} is a red-blue $d$-colouring where the vertices of $S$ are coloured red and those of $T$ are coloured blue. When considering the case of $d=1$, we also use the name of \emph{red-blue $(S,T)$-colouring}.
A \emph{perfect red-blue $(S,T)$-colouring} is a red-blue $(S,T)$-colouring of $G$ where every vertex in $S$ and $T$ for which all neighbours are coloured has exactly one neighbour of the other colour.
We call $(S,T)$ a \emph{precoloured pair} and we define the following rules:
\begin{description}
\item [\bf R1.] Return {\tt no} (i.e., $G$ has no red-blue $(S,T)$-colouring) if 
	\begin{itemize}
		\item[i)] a vertex $v\in Z$ is adjacent to $d+1$ vertices in $S$ and to $d+1$ vertices in $T$.
		\item[ii)] a vertex $v \in S$ has $d+1$ neighbours in $T$.
		\item[iii)] a vertex $v \in T$ has $d+1$ neighbours in $S$.
	\end{itemize}

\item [{\bf R2.}] Let $v\in Z$.
\begin{itemize}
\item [(i)] If $v$ is adjacent to $d+1$ vertices of $S$, then move $v$ from $Z$ to $S$. 
\item [(ii)] If $v$ is adjacent to $d+1$ vertices of $T$, then move $v$ from $Z$ to $T$. 
\end{itemize}
\end{description}

\noindent
For perfect red-blue colourings we add two more rules.
\begin{description}
\item [{\bf R3.}] Return {\tt no} (i.e., $G$ has no perfect red-blue $(S,T)$-colouring) if
\begin{itemize}
\item [(i)] for a vertex $v \in S$,  $N(v) \subseteq S$, i.e.~$v$ cannot have a neighbour in $T$. 
\item [(ii)] for a vertex $v \in T$,  $N(v) \subseteq T$, i.e.~$v$ cannot have a neighbour in $S$. 
\end{itemize}

\item [{\bf R4.}] Let $v \in Z$.
\begin{itemize}
\item [(i)] If $v$ has a neighbour in $S$ with a neighbour in $T$, add $v$ to $S$.
\item [(ii)] If $v$ has a neighbour in $T$ with a neighbour in $S$, add $v$ to $T$.
\end{itemize}
\end{description}

%
%\begin{description}
%\item [{\bf R5.}] Let $v \in Z$.
%\begin{itemize}
%\item [(i)] If $N(v) = \{u\}$ and $u \in S$, add $v$ to $T$.
%\item [(ii)] If $N(v) = \{u\}$ and $u \in T$, add $v$ to $S$.
%\end{itemize}
%\end{description}

\noindent
Given a precoloured pair we will apply these rules exhaustively.  We then say that $G$ is \emph{colour-processed}. The next lemmas show that we may always assume without loss of generality that $G$ is colour-processed.
We omit the proof of the first, which was already given in~\cite{LMPS24}.

\begin{lemma}[\cite{LMPS24}]\label{l-all-precol}
Let $G$ be a connected graph with a precoloured pair $(S,T)$.  It is possible, in polynomial time, to either colour-process $(S,T)$ using rules R1 and R2 or to find that $G$ has no red-blue $(S,T)$-$d$-colouring.
\end{lemma}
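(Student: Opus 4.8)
The plan is to show that rules R1 and R2 are sound and that applying them exhaustively costs only polynomial time, so that the procedure is simply: repeatedly test the preconditions of R1 and R2 and act accordingly until none applies. Rules R3 and R4 play no role here, since they are only used for perfect red-blue colourings.

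First I would verify soundness. For R2, suppose $G$ has a red-blue $(S,T)$-$d$-colouring $c$ and some $v \in Z$ has $d+1$ neighbours in $S$. Then $c(v)$ cannot be blue, since otherwise $v$ would be a blue vertex with at least $d+1$ red neighbours, contradicting that $c$ is a red-blue $d$-colouring; hence $c(v)$ is red and $c$ is in fact a red-blue $(S \cup \{v\}, T)$-$d$-colouring. The case R2(ii) is symmetric. Therefore the set of red-blue $(S,T)$-$d$-colourings is unchanged by an application of R2, and by induction it is unchanged after any sequence of applications of R2. For R1, in case (i) the vertex $v \in Z$ has $d+1$ neighbours of whichever colour we do not give it, in case (ii) the vertex $v \in S$ is red with $d+1$ blue neighbours, and in case (iii) the symmetric situation occurs; in each case the current precoloured pair admits no red-blue $d$-colouring, and by the previous sentence this is then also true of the original pair $(S,T)$, so returning {\tt no} is correct.

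Finally I would bound the running time: each application of R2 moves a vertex out of $Z$, so R2 fires at most $|V(G)|$ times, and between consecutive firings it suffices to scan all vertices once, counting for each vertex its number of neighbours in $S$ and in $T$, to test every precondition of R1 and R2 in $O(|V(G)| + |E(G)|)$ time. When the scan finds no applicable rule, $(S,T)$ is by definition colour-processed. The only point that needs a little care is that a {\tt no} answer reached after several rounds of R2 is still valid for the \emph{input} pair, but this is exactly the statement that the set of colourings is preserved by R2, established above; so I do not expect a genuine obstacle, the argument being in essence a soundness check together with a termination count.
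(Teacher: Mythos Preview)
Your argument is correct: soundness of R2 (each application preserves the set of red-blue $(S,T)$-$d$-colourings), soundness of R1 (each trigger witnesses that no such colouring exists, for the current and hence the original pair), and termination in at most $|V(G)|$ rounds of R2 with a polynomial scan per round. Note, however, that the paper does not give its own proof of this lemma at all; it explicitly omits it and refers to~\cite{LMPS24}. So there is no in-paper proof to compare against, but your write-up is exactly the standard justification one would expect and would serve perfectly well as the missing argument.
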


\noindent
 For the proof of the second lemma, note that rules R1 -- R4 are a subset of the rules given in~\cite{LPR23a}. Thus, the following follows immediately from~\cite{LPR23a}. 
\begin{lemma}\label{l-pmc-precol}
Let $G$ be a connected graph with a precoloured pair $(S,T)$.  It is possible, in polynomial time, to either colour-process $(S,T)$ using rules R1 -- R4 or to find that $G$ has no perfect red-blue $(S,T)$-colouring.
\end{lemma}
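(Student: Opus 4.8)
The plan is to verify two things: (i) each single application of a rule among R1--R4 is \emph{sound}, meaning that it either correctly reports that $G$ has no perfect red-blue $(S,T)$-colouring, or it replaces $(S,T)$ by a pair $(S',T')$ such that $G$ has a perfect red-blue $(S',T')$-colouring if and only if it has one for $(S,T)$; and (ii) applying the rules exhaustively takes polynomial time. Since R1--R4 are a subset of the propagation rules analysed in~\cite{LPR23a}, both facts are already established there, so the proof itself can simply point this out; what follows is the short self-contained reasoning one would otherwise give.

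For the ``return {\tt no}'' rules I would fix a hypothetical perfect red-blue $(S,T)$-colouring $c$ and derive a contradiction from each triggering condition. Recall that $c$ colours every vertex, that $S$ is red and $T$ is blue in $c$, that every vertex has at most one neighbour of the other colour, and that every vertex of $S\cup T$ has exactly one such neighbour. Rule R1 is then immediate: a vertex of $Z$ adjacent to two red and two blue vertices can receive neither colour; a vertex of $S$ with two neighbours in $T$ already has two blue neighbours; symmetrically for R1(iii). Rule R3 is immediate as well: a vertex $v\in S$ with $N(v)\subseteq S$ (or $v\in T$ with $N(v)\subseteq T$) has no neighbour of the other colour, contradicting that every vertex of $S\cup T$ has exactly one.

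For the ``move'' rules I would again start from a hypothetical $c$ and argue that the moved vertex is forced. If $v\in Z$ has two neighbours in $S$, then colouring $v$ blue would give it two red neighbours, so $c$ colours $v$ red and moving $v$ into $S$ keeps $c$ consistent; R2(ii) is symmetric. If $v\in Z$ has a neighbour $u\in S$ which in turn has a neighbour $w\in T$, then $u$ is red and $w$ is blue, so $w$ is \emph{the} unique neighbour of $u$ of the other colour, whence every other neighbour of $u$ — in particular $v$ — must be red in $c$; thus $c$ is a perfect red-blue $(S\cup\{v\},T)$-colouring, and R4(ii) is symmetric. The converse direction of the equivalence is trivial, since any colouring respecting the enlarged precoloured pair also respects the original one.

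For the running time I would note that rules R2 and R4 strictly increase $|S|+|T|$, so across the whole run they fire at most $|V(G)|$ times, and if R1 or R3 fires we stop immediately; testing applicability of any rule only requires inspecting, for each vertex, the current colours of its neighbours, which is polynomial, so the whole procedure is polynomial. I expect the only step needing genuine care to be the soundness of R4: unlike R1 and R2 it really exploits the ``exactly one neighbour of the other colour'' property of a \emph{perfect} red-blue colouring, not merely the at-most-one property of an ordinary matching cut; everything else is bookkeeping, and a fully detailed version is precisely the argument in~\cite{LPR23a}.
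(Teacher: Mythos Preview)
Your proposal is correct and takes essentially the same approach as the paper: the paper's entire proof is the observation that rules R1--R4 are a subset of the rules analysed in~\cite{LPR23a}, from which the lemma follows immediately. You additionally spell out the soundness of each rule and the termination/running-time argument that the paper leaves implicit in the citation, and those details are all accurate (in particular your treatment of R4, which indeed is the only rule relying on the ``exactly one'' condition of a perfect colouring rather than merely the ``at most one'' condition).
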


%\noindent
%We make an important observation.
%\begin{observation}\label{o-all-colproc}
%Let $G$ be a colour-processed connected graph with a precoloured pair $(S,T)$.
%	Let $Z$ be the set of uncoloured vertices of $G$.
%Then, every vertex in $Z$ has at most $d$ neighbours in $S$ and $d$ neighbours in $T$.
%
%For a perfect red-blue colouring it holds further that every vertex all whose neighbours are coloured is coloured and has a neighbour of the other colour.
%\end{observation}

\noindent
For perfect red-blue colourings we will also make use of the following result.
\begin{lemma}[\cite{LPR23a}]\label{l-pmc-monocol}
Let $G$ be a colour-processed graph and $Z$ be the set of uncoloured vertices. We can find in polynomial time a perfect red-blue colouring of $G$ where every connected component of $G[Z]$ is monochromatic or determine that no such colouring exists.
\end{lemma}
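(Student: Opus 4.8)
The plan is to reduce the task to a 2-SAT-type question whose variables are the connected components of $G[Z]$, each of which has to be coloured entirely red or entirely blue. Since $G$ is colour-processed, the structure is already rigid: rule R2 forces every uncoloured vertex to have at most one neighbour in $S$ and at most one in $T$; rule R1 forces every vertex of $S$ to have at most one neighbour in $T$ (and symmetrically for $T$); and rule R4 forces a vertex of $S$ that has a neighbour in $T$ to have \emph{no} neighbour in $Z$ at all (and symmetrically). Hence a vertex $v\in S$ with a neighbour in $T$ is automatically happy in any extension, whereas a vertex $v\in S$ with no neighbour in $T$ has a neighbour in $Z$ by rule R3 and needs to receive exactly one blue neighbour from $Z$; the symmetric statement holds for vertices of $T$. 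I will call such vertices \emph{demanding}.

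Next I would classify each component $C$ of $G[Z]$. Colouring $C$ red makes every $u\in C$ monochromatic with its neighbours inside $C$ and in $S$, so $u$'s only possible opposite-coloured neighbour is its unique neighbour in $T$; thus red is \emph{locally feasible} for $C$ exactly when every vertex of $C$ has a neighbour in $T$ and no two vertices of $C$ share such a neighbour, and symmetrically blue is locally feasible exactly when every vertex of $C$ has a neighbour in $S$ with the analogous distinctness condition. If some component is feasible in neither colour, return {\tt no}. Otherwise every component is red-only, blue-only, or free, and some colours are thereby forced.

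The remaining global requirements are: for each demanding $t\in T$, exactly one component adjacent to $t$ is red; for each demanding $v\in S$, exactly one component adjacent to $v$ is blue. The ``at most one'' halves of these are binary constraints --- two components sharing a neighbour in $T$ cannot both be red, two sharing a neighbour in $S$ cannot both be blue --- so together with the unit clauses from the red-only/blue-only classification they form a 2-SAT instance that we build and solve in polynomial time. I would then propagate forced colours exhaustively (a component adjacent through a vertex of $T$ to a red component must be blue, and so on) and argue that, after this propagation, every ``at least one'' requirement is either already met by a forced colour or witnesses a {\tt no}-instance, so that it becomes a check rather than a choice. A satisfying assignment is turned into the desired perfect red-blue colouring by colouring each component with its assigned colour; if none exists, or some demanding vertex's existence requirement fails, no such colouring exists.

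The main obstacle is precisely the ``at least one'' half of ``exactly one opposite-coloured neighbour'': this is a clause of unbounded width when a demanding vertex is adjacent to three or more components whose colours are still free, and such clauses are not captured by 2-SAT. The crux of the proof is therefore a structural argument --- using the processing rules, and if necessary the additional reduction rules of~\cite{LPR23a} of which R1--R4 form only a part --- showing that colour-processing already excludes such configurations, so that after propagation every demanding vertex's requirement is forced or checkable and the residual problem is genuinely 2-SAT. Everything else (classifying components, building and solving the 2-SAT instance, propagating, and reading off the colouring) is routine and runs in polynomial time.
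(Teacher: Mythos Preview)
The paper does not prove this lemma at all; it is quoted verbatim from~\cite{LPR23a}, so there is no in-paper argument to compare against. That said, your sketch is on the right track---a Boolean variable per component of $G[Z]$, unary clauses from local feasibility, and binary ``at most one'' clauses from shared demanding neighbours---and you correctly isolate the one genuine difficulty: the ``at least one'' half of the exactly-one constraint at a demanding vertex $v\in S\cup T$ adjacent to three or more free components is a wide clause that $2$-SAT cannot absorb.

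The gap is that you do not discharge this difficulty. Rules R1--R4 alone do \emph{not} exclude such configurations: take $v\in S$ with no neighbour in $T$ and with pendant neighbours $z_1,\dots,z_k\in Z$, each $z_i$ a singleton component with its own private neighbour $t_i\in T$ (and each $t_i$ having some further neighbour in $S$ so that R3 is satisfied). This survives R1--R4, every $z_i$ is locally feasible in both colours, and the constraint ``exactly one $z_i$ blue'' genuinely has width $k$. Your proposed fix---propagate forced colours and then declare that every remaining ``at least one'' requirement is already met---fails on this example, since nothing is forced. The actual argument in~\cite{LPR23a} uses additional reduction rules beyond R1--R4 (as the present paper explicitly notes just before Lemma~\ref{l-pmc-precol}); those extra rules are precisely what tames the wide clauses, and without stating and using them your reduction to $2$-SAT is incomplete.
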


\noindent
To obtain our results for \mmc{} and \mdpm{} we need two more lemmas from~\cite{LPR24}.

 \begin{lemma}[\cite{LPR24}]\label{l-p-mmc-applplesnik}
Let $G=(V,E)$ be a connected graph with a precoloured pair $(S,T)$. If $V\setminus (S\cup T)$ is an independent set, then it is possible to find in polynomial time either a maximum valid red-blue $(S,T)$-colouring of~$G$, or conclude that $G$ has no valid red-blue $(S,T)$-colouring.
\end{lemma}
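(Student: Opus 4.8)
The plan is to reduce the task, after colour-processing, to one maximum-matching computation together with a Hall-type feasibility check. First I would apply Lemma~\ref{l-all-precol} with $d=1$ to colour-process $(S,T)$ using R1 and R2; this either certifies that $G$ has no red-blue $(S,T)$-colouring, in which case we are done, or returns a colour-processed graph. Let $Z=V\setminus(S\cup T)$ be the uncoloured vertices. Colour-processing never moves a vertex out of $S$ or $T$, so $Z$ only shrinks and, being a subset of the original independent set, stays independent. Since the graph is colour-processed for $d=1$, every $v\in Z$ has at most one neighbour in $S$ and at most one in $T$, every $u\in S$ has at most one neighbour in $T$, and every $u\in T$ has at most one neighbour in $S$; in particular $E(S,T)$ is a matching.

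The central step is a correspondence between valid $(S,T)$-colourings and constrained matchings. In any valid red-blue colouring the bichromatic edges form a matching of $G$, every $S$--$T$ edge is bichromatic, and every edge whose endpoints are forced to be monochromatic lies inside $S$ or inside $T$ (there is no edge inside $Z$). I would delete from $G$ all edges inside $S$, all edges inside $T$, and all vertices covered by $E(S,T)$ (each such vertex already has its unique bichromatic partner and can be in no further bichromatic edge); what remains is a bipartite graph $H$ with parts $Z$ and $(S\cup T)\setminus V(E(S,T))$. Call $v\in Z$ \emph{mixed} if it has both a neighbour in $S$ and a neighbour in $T$, and let $X$ be the set of mixed vertices. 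I would then show that valid red-blue $(S,T)$-colourings of $G$ correspond — up to the irrelevant colours of uncoloured vertices with no neighbours — to matchings $M$ of $H$ that saturate $X$: given such an $M$, colour $v\in Z$ blue if it is matched into $S$, red if matched into $T$, and, if $v$ is unmatched, the same colour as all of its neighbours (possible exactly because $v$ is not mixed). The colour-processing bounds (at most one $T$-neighbour per $S$-vertex, and symmetrically) together with the independence of $Z$ guarantee that this is valid at the coloured vertices too, and the value of the colouring is exactly $|E(S,T)|+|M|$. Conversely the bichromatic edges outside $E(S,T)$ of any valid colouring form such a matching of $H$.

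It then remains to find a maximum matching of $H$ subject to saturating $X$. First check, by a maximum matching computation in $H[X\cup N_H(X)]$, whether Hall's condition holds for $X$; if not, conclude that $G$ has no valid red-blue $(S,T)$-colouring. If it does, compute a maximum matching $M^{*}$ of $H$ and any matching $M'$ saturating $X$, and reroute $M^{*}$ along the vertex-disjoint alternating paths of $M^{*}\triangle M'$: a path with an endpoint in $X\setminus V(M^{*})$ cannot be $M^{*}$-augmenting, so swapping along it keeps the size, and doing this for all such endpoints yields a maximum matching of $H$ that also saturates $X$. Translating it back through the correspondence gives a maximum valid red-blue $(S,T)$-colouring, and every step runs in polynomial time. (Equivalently, the whole reduction can be phrased as a degree-constrained subgraph / constrained-matching instance — upper bounds on the coloured vertices and on the $Z$-vertices, lower bound~$1$ on the mixed $Z$-vertices — and one invokes Plesn\'ik's polynomial-time algorithm for constrained matchings directly.)

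The step I expect to be the main obstacle is making the correspondence watertight: one must verify that the colouring read off from a matching of $H$ is valid at the \emph{coloured} vertices (this is precisely where the bound ``at most one $T$-neighbour per $S$-vertex'', its symmetric analogue, and the independence of $Z$ are used), and that the only obstruction to the existence of a valid red-blue $(S,T)$-colouring is the failure of Hall's condition for $X$ in $H$, so that the feasibility test is complete and the reported maximum is correct.
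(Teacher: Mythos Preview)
The paper does not give its own proof of this lemma; it is quoted from~\cite{LPR24} and used as a black box. So there is nothing in the present paper to compare against beyond the label \texttt{l-p-mmc-applplesnik}, which strongly suggests that the proof in~\cite{LPR24} proceeds by reducing, after colour-processing, to a single call to Plesn\'ik's algorithm for degree-constrained matchings---exactly the parenthetical alternative you mention at the end.

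Your main argument is correct and slightly more elementary than that. The key reduction is the same: after applying R1/R2, the $S$--$T$ edges form a matching, every $z\in Z$ has at most one neighbour on each side, and so a valid $(S,T)$-colouring is the same datum as a matching of the bipartite graph $H$ (between $Z$ and the uncovered coloured vertices) that saturates the set $X$ of mixed vertices. Your verification of validity on the coloured side is the right place to spend the care you flag, and it goes through: a vertex $u\in S\setminus V(E(S,T))$ has no $T$-neighbour and at most one $Z$-neighbour matched to it, while a vertex $u\in S\cap V(E(S,T))$ has all its $Z$-neighbours forced red because their unique $S$-neighbour $u$ has been deleted from $H$ and the $X$-saturation pushes them to their $T$-side (or they are non-mixed and inherit $u$'s colour). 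The only point to state a touch more explicitly is the degenerate case where a mixed vertex has both its $S$- and its $T$-neighbour in $V(E(S,T))$: then it is isolated in $H$, Hall fails, and indeed no valid colouring exists---so the feasibility test is complete.

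Where you differ from the likely original is in the optimisation step: instead of encoding ``saturate $X$'' as lower-bound constraints and invoking Plesn\'ik, you run an unconstrained maximum matching on $H$ and repair it along $M^{*}\triangle M'$-paths, using that no such path ending in $X\setminus V(M^{*})$ can be $M^{*}$-augmenting. This is a clean, self-contained substitute for the Plesn\'ik call and yields the same conclusion in polynomial time.
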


 \begin{lemma}[\cite{LPR24}]\label{l-p-mdpm-applplesnik}
Let $G=(V,E)$ be a connected graph with a precoloured pair $(S,T)$. If $V\setminus (S\cup T)$ is an independent set, then it is possible to find in polynomial time either a maximum perfect-extendable red-blue $(S,T)$-colouring of $G$, or conclude that $G$ has no perfect-extendable red-blue $(S,T)$-colouring.
\end{lemma}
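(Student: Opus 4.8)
\medskip
\noindent\emph{Proof idea.}
The plan is to reduce the problem to computing a maximum-weight perfect matching in an auxiliary graph, which is polynomial by Edmonds' algorithm. First I would bring $G$ into a normal form: apply Lemma~\ref{l-all-precol} to colour-process $(S,T)$ with rules R1 and R2. Either we learn that $G$ has no red-blue $(S,T)$-colouring at all — in which case it certainly has no perfect-extendable one and we output {\tt no} — or we obtain a colour-processed pair. Since R2 only moves vertices from $Z := V\setminus(S\cup T)$ into $S\cup T$, the set $Z$ stays independent. In the colour-processed graph every $v\in Z$ has at most one neighbour in $S$ and at most one in $T$ (otherwise R1 or R2 would still apply), and every vertex of $S$ has at most one neighbour in $T$ and vice versa. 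Hence in \emph{every} valid red-blue $(S,T)$-colouring the set of bichromatic edges is exactly the fixed matching $M_{ST}$ of all $S$--$T$ edges, together with, for each $v\in Z$, the at most one edge joining $v$ to the opposite colour class in $S\cup T$; in particular a $Z$-vertex with a neighbour in both $S$ and $T$ contributes exactly one bichromatic edge whatever its colour. So the value of a valid $(S,T)$-colouring equals a constant ($|M_{ST}|$ plus the number of $Z$-vertices with neighbours on both sides) plus the number of \emph{pendant} $Z$-vertices (those with a unique neighbour) coloured so as to make their unique edge bichromatic. The whole problem thus reduces to: among the valid $(S,T)$-colourings that are moreover perfect-extendable, maximise the number of such pendant $Z$-vertices coloured ``away'' from their neighbour.

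The core step is to express this optimisation as a maximum-weight perfect matching instance. I would build an auxiliary graph $H$ from $G$ by keeping a copy of $G$ to host the perfect matching $M$ witnessing perfect-extendability, and attaching to each $v\in Z$ a small gadget. For a $Z$-vertex $v$ with neighbours $s\in S$ and $t\in T$, the gadget forces exactly one of $vs,vt$ into $M$; this edge is then automatically bichromatic, because $s$ is red and $t$ is blue and the other edge becomes monochromatic. For a pendant $v\in Z$ with unique neighbour $s$, the gadget forces $vs\in M$ but offers a weight-$1$ versus weight-$0$ choice of declaring $vs$ bichromatic (``$v$ blue'') or not. Every potential bichromatic edge gets weight $1$ and all other edges weight $0$, so the weight of a perfect matching of $H$ equals the value of the corresponding colouring up to the additive constant above. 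The validity requirement that each vertex of $S\cup T$ be incident to at most one bichromatic edge is precisely a capacity-$1$ constraint at that vertex, enforced automatically since all such edges meet it in the matching. A maximum-weight perfect matching of $H$ then yields a maximum-value perfect-extendable $(S,T)$-colouring, and $H$ having no perfect matching means $G$ has no perfect-extendable $(S,T)$-colouring.

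The delicate part — and where I expect the real work to lie — is the gadget design together with the two-directional correspondence between perfect matchings of $H$ and perfect-extendable colourings of $G$. The subtle interaction is between the validity ``slot'' of a vertex of $S\cup T$ and the matching edges forced at pendants: a pendant $v\in Z$ coloured towards its neighbour $s$ still forces $vs\in M$, so $s$ is matched, yet $s$'s cross-colour slot is \emph{not} used, whereas a pendant coloured away from $s$ both forces $vs\in M$ and uses the slot; the gadget must keep these two cases apart. It must also detect the configurations that render the instance infeasible — for instance two pendants sharing a neighbour, or a pendant attached to a vertex already carrying a forced $S$--$T$ bichromatic edge — which amounts to one or two extra colour-processing-style rules applied before building $H$. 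All of this localises correctly precisely because $Z$ is independent, so distinct $Z$-vertices interact only through common neighbours in $S\cup T$; once the gadget is in place, verifying the correspondence and the weight bookkeeping is routine. The proof of Lemma~\ref{l-p-mmc-applplesnik} is the same construction with the perfect-extendability requirement (and hence the need to host a perfect matching of $G$) dropped, which only makes the gadgets simpler.
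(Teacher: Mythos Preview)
The paper does not prove this lemma; it is quoted from~\cite{LPR24} without argument, so there is no in-paper proof to compare against. That said, your reduction via matchings is in the right spirit, and your structural analysis after colour-processing is correct: every $Z$-vertex has at most one neighbour in each of $S$ and $T$, the $S$--$T$ edges form a fixed matching $M_{ST}$, and validity at $S\cup T$ is enforced automatically once the bichromatic edges sit inside a perfect matching.

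You are, however, working harder than necessary, and the ``delicate part'' you anticipate does not exist. Once $(S,T)$ is colour-processed and $Z$ is independent, \emph{any} perfect matching $M\supseteq M_{ST}$ of $G$ yields a perfect-extendable $(S,T)$-colouring of value exactly $|M_{ST}|+|Z|$: simply colour each $v\in Z$ so that its unique $M$-edge is bichromatic (pendants are matched to their sole neighbour anyway; two-neighbour vertices are matched to one of their two). Your own observation that the matching property of $M$ enforces validity shows this colouring is valid, and $|M_{ST}|+|Z|$ is a trivial upper bound on the value. Hence the whole lemma reduces to testing whether $G-V(M_{ST})$ has a perfect matching --- one unweighted computation, no gadgets, no weights.

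Your closing remark gets the relationship between the two lemmas backwards. It is Lemma~\ref{l-p-mmc-applplesnik} that needs a genuine optimisation: without the perfect-matching requirement, pendants and two-neighbour $Z$-vertices compete for the single opposite-colour slot at each vertex of $S\cup T$, and different feasible colourings have different values. Here the perfect-matching constraint resolves all that competition for you and pins the maximum value to a constant.
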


\section{Polynomial-Time Results}\label{s-poly}

\noindent
In~\cite{LL19}, Le and Le showed that \mc{} is solvable in polynomial time for bipartite graphs of diameter~$3$. We adapt their proof to show that \pmc{} is solvable in polynomial time for bipartite graphs of diameter~$3$. 

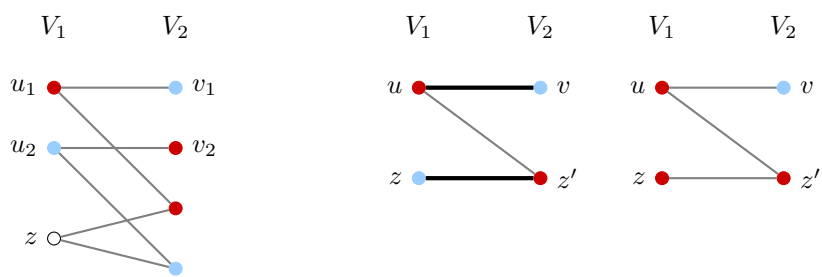
\begin{figure}
\centering
\begin{tikzpicture}
\begin{scope}[scale = 0.8]
\begin{scope}
\node[rvertex, label = left:$u_1$] (u1) at (0,3){};
\node[bvertex, label = left:$u_2$] (u2) at (0,2){};
\node[evertex, label = left: $z$] (z) at (0,0.5){};
\node[bvertex, label = right: $v_1$] (v1) at (2,3){};
\node[rvertex, label = right: $v_2$] (v2) at (2,2){};
\node[rvertex] (z1) at (2,1){};
\node[bvertex] (z2) at (2,0){};

\draw[edge] (u1) -- (v1);
\draw[edge] (u1) -- (z1);
\draw[edge] (u2) -- (v2);
\draw[edge] (u2) -- (z2);
\draw[edge] (z) -- (z1);
\draw[edge] (z) -- (z2);

\node[] (V1) at (0,4){$V_1$};
\node[] (V2) at (2,4){$V_2$};
\end{scope}

\begin{scope}[shift = {(6,0)}]
\node[rvertex, label = left:$u$] (u1) at (0,3){};
\node[bvertex, label = left: $z$] (z) at (0,1.5){};
\node[bvertex, label = right: $v$] (v1) at (2,3){};
\node[rvertex, label = right: $z'$] (z1) at (2,1.5){};

\draw[tedge] (u1) -- (v1);
\draw[edge] (u1) -- (z1);
\draw[tedge] (z) -- (z1);

\node[] (V1) at (0,4){$V_1$};
\node[] (V2) at (2,4){$V_2$};
\end{scope}

\begin{scope}[shift = {(10,0)}]
\node[rvertex, label = left:$u$] (u1) at (0,3){};
\node[rvertex, label = left: $z$] (z) at (0,1.5){};
\node[bvertex, label = right: $v$] (v1) at (2,3){};
\node[rvertex, label = right: $z'$] (z1) at (2,1.5){};

\draw[edge] (u1) -- (v1);
\draw[edge] (u1) -- (z1);
\draw[edge] (z) -- (z1);

\node[] (V1) at (0,4){$V_1$};
\node[] (V2) at (2,4){$V_2$};
\end{scope}
\end{scope}
\end{tikzpicture}
\caption{\label{f-p-pmc-bipdiam3} An illustration of the colouring in Phase 1 (left) and the colouring in Phase 2 (middle, right) in the proof of Theorem~\ref{t-pmc-bipdiam3}.  Note that for Phase~2 the colouring in the middle leads to a contradiction to the fact that Phase~1 failed.}
\end{figure}

\begin{observation}[\cite{LL19}]\label{o-pr-all-bipdiam}
Let $G = (V,E)$ be a bipartite graph of diameter at most~$3$ with partition classes $V_1$ and $V_2$. 
Let $u, v \in V_i$, for some $i \in \{1,2\}$. Then, $u$ and $v$ have a common neighbour.
\end{observation}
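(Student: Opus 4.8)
The plan is to exploit the parity of path lengths in a bipartite graph. Since $G$ has finite diameter it is connected, so $\dist(u,v)$ is well defined. If $u = v$ the claim is trivial (any neighbour of $u$ serves as a common neighbour, and $u$ has one as $G$ is connected with at least two vertices), so assume $u \neq v$. Because $u$ and $v$ lie in the same partition class $V_i$, no edge of $G$ joins them, hence $\dist(u,v) \geq 2$.

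Next I would invoke the standard fact that in a bipartite graph with classes $V_1$ and $V_2$ every path alternates between $V_1$ and $V_2$; consequently any path joining two vertices of the same class has even length. In particular $\dist(u,v)$ is even. Combining this with $2 \leq \dist(u,v) \leq 3$ (the upper bound being the diameter hypothesis) forces $\dist(u,v) = 2$.

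Finally, fixing a shortest $u$--$v$ path, which must be of the form $u\,w\,v$ for some vertex $w \in V_{3-i}$, the internal vertex $w$ is adjacent to both $u$ and $v$ and is therefore the desired common neighbour. I do not anticipate any genuine obstacle here; the only points that need to be stated cleanly are the degenerate case $u=v$ and the parity argument for bipartite graphs, both of which are immediate.
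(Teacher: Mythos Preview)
Your argument is correct and is exactly the natural proof: parity of path lengths in a bipartite graph forces $\dist(u,v)$ to be even, and together with the diameter bound this gives $\dist(u,v)=2$, hence a common neighbour. The paper does not supply its own proof of this observation at all; it merely states it and cites~\cite{LL19}, so there is nothing to compare against beyond noting that your reasoning is the standard one-line justification behind the cited fact.
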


\begin{theorem}\label{t-pmc-bipdiam3}
\pmc{} is polynomial-time solvable for bipartite graphs of diameter at most~$3$.
\end{theorem}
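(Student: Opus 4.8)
The plan is to work with colourings rather than cuts: by Observation~\ref{o-cut-colouring}, $G$ has a perfect matching cut if and only if it has a perfect red-blue colouring, so I would design a polynomial-time algorithm deciding the latter on connected bipartite graphs of diameter at most~$3$. (A graph has a perfect matching cut iff each of its connected components does, and $K_1$ (no) and $K_2$ (yes) are trivial, so assuming connectivity with $|V|\ge 3$ is harmless.) Let $V_1,V_2$ be the bipartition classes. I would first record a rigidity observation: in any perfect red-blue colouring of such a graph, \emph{both} $V_1$ and $V_2$ use both colours. Indeed, if say $V_1$ is monochromatically red, then a red vertex of $V_2$ would have no blue neighbour, so $V_2$ is monochromatically blue, every edge is bichromatic, the perfect matching formed by the bichromatic edges is all of $E(G)$, and hence $G=K_2$.

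The core of the algorithm (Phase~1 in the figure) is a bounded guess followed by propagation. I would guess a vertex $u_1\in V_1$ that is red, a vertex $u_2\in V_1$ that is blue, and the \emph{unique} opposite-coloured neighbours $v_1\in N(u_1)\subseteq V_2$ and $v_2\in N(u_2)\subseteq V_2$ of $u_1$ and $u_2$ --- this is $O(n^4)$ guesses. Each guess forces $N(u_1)\setminus\{v_1\}$ and $N(v_2)\setminus\{u_2\}$ to be red and $N(u_2)\setminus\{v_2\}$ and $N(v_1)\setminus\{u_1\}$ to be blue, so I put these into a precoloured pair $(S,T)$ and colour-process it using Lemma~\ref{l-pmc-precol}. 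The key structural claim --- and where the diameter bound enters, via Observation~\ref{o-pr-all-bipdiam} --- is that after colour-processing every still-uncoloured vertex has \emph{exactly one} neighbour in $S$ and exactly one in $T$. For an uncoloured $v\in V_1$: $v$ and $u_1$ have a common neighbour, and if the only common neighbour were $v_1$ then $v\in N(v_1)$ and $v$ would already be coloured; so $v$ has a common neighbour with $u_1$ in $N(u_1)\setminus\{v_1\}\subseteq S$, and only one, for otherwise rule R2 would have moved $v$ into $S$; the blue neighbour of $v$ in $T$ is found symmetrically through $u_2$. An uncoloured $v\in V_2$ is handled the same way, with $v_1,v_2$ and the neighbourhoods $N(v_1),N(v_2)$ playing the roles of $u_1,u_2$ and $N(u_1),N(u_2)$.

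Granting this claim, in \emph{any} completion of $(S,T)$ to a perfect red-blue colouring an uncoloured vertex that becomes red has all its other neighbours red (its single opposite-coloured neighbour already lying in $T$), and symmetrically if it becomes blue; hence every connected component of the uncoloured part is monochromatic in every completion, so Lemma~\ref{l-pmc-monocol} either extends the precolouring to a perfect red-blue colouring of $G$ or correctly certifies that this guess cannot be completed. For completeness (the role of Phase~2): if every guess fails, then $G$ has no perfect red-blue colouring, because a hypothetical one would yield $u_1,u_2,v_1,v_2$ as above --- they exist by the rigidity observation --- and Phase~1 would succeed on exactly that guess; the middle-versus-right pictures of Figure~\ref{f-p-pmc-bipdiam3} depict the forcing step of this argument, showing that colouring a vertex the ``wrong'' way (matched to the common neighbour $z'$, as in the middle) would already have been detected by Phase~1, so the degenerate sub-cases (a relevant common neighbour coinciding with $v_1$ or $v_2$) cause no loss. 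Since there are polynomially many guesses and each is processed in polynomial time, this is a polynomial-time algorithm.

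The step I expect to be the real work is the structural claim in Phase~1: showing that in a colour-processed bipartite graph of diameter at most~$3$ the short common-neighbour structure genuinely pins every uncoloured vertex down to a single neighbour in $S$ and a single one in $T$, with all the degenerate coincidences disposed of cleanly. This is precisely what makes passing to monochromatic uncoloured components lossless, and hence what makes the bounded guessing of Phase~1 sufficient; the rest is bookkeeping.
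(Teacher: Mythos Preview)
Your approach is essentially the paper's Phase~1: guess two bichromatic edges $u_1v_1$ and $u_2v_2$ with $u_1,v_2$ red and $u_2,v_1$ blue, colour-process, use Observation~\ref{o-pr-all-bipdiam} to show every uncoloured vertex already has both a red and a blue coloured neighbour (your ``exactly one in $S$, exactly one in $T$'' is the paper's ``at least one of each colour'' together with rule~R2), conclude that uncoloured components are monochromatic in every completion, and hand off to Lemma~\ref{l-pmc-monocol}. The one substantive difference is that the paper follows this with an explicit Phase~2 (branching on a single bichromatic edge $uv$ and arguing the rest is forced) to cover colourings not caught by Phase~1; your rigidity observation---that for connected $G$ with $|V|\ge 3$ every perfect red-blue colouring uses both colours on each side of the bipartition, so suitable $u_1,u_2,v_1,v_2$ always exist---renders Phase~2 unnecessary and is a genuine, if small, simplification over the paper's argument. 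Your reading of the middle and right pictures in Figure~\ref{f-p-pmc-bipdiam3} is slightly off (they illustrate the paper's Phase~2 forcing, not a degenerate case of Phase~1), but this does not affect the correctness of your proof.
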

\begin{proof}
Let $G = (V,E)$ be a connected bipartite graph of diameter at most~$3$ with partition classes~$V_1$ and $V_2$.
Note that in a bipartite graph every perfect matching cut has size $|V|/2$.
We may assume that $|V_1|, |V_2| \geq 2$, since otherwise it is trivial to decide if $G$ has a perfect matching cut or not.
This implies that any perfect matching cut in $G$ has at least $2$ edges.
We apply Observation~\ref{o-cut-colouring} and check whether the graph has a perfect red-blue colouring.
The proof consists of two phases. In Phase~1 we only search for perfect matching cuts with a special structure. In Phase~2 we consider the case where no perfect matching cut was found in Phase~1.

\medskip\noindent\textbf{Phase 1.} We branch over all $O(n^4)$ options of choosing two edges $u_1v_1, u_2v_2 \in E$, with $u_1,u_2 \in V_1$ and $v_1,v_2 \in V_2$, and we colour $u_1$ and $v_2$ red and $v_1$ and $u_2$ blue.
We define a precoloured pair $(S,T)$ with $S = \{u_1, v_2\}$ and $T = \{v_1, u_2\}$.
We apply rules R1--R4 exhaustively. If we find a no-answer, then we discard the option by Lemma~\ref{l-pmc-precol}. Else we may assume that $G$ is colour-processed. Let $Z$ be the set of uncoloured vertices of $G$.

\begin{claim}
Every connected component of $G[Z]$ in every perfect red-blue $(S,T)$-colouring of $G$ is monochromatic.
\end{claim}
\begin{claimproof}
Let $z \in Z$. Assume first that $z \in V_1$. By Observation~\ref{o-pr-all-bipdiam} we know that $z$ and~$u_1$ have a common neighbour. Further, since $u_1 \in S^*$, we get from Rule R4 that the common neighbour of $z$ and $u_1$ is red. Similarly, $z$ and $u_2$ have a common neighbour which is blue. So every uncoloured vertex in $V_1$ has both a red and a blue neighbour, see Figure~\ref{f-p-pmc-bipdiam3} (left).
Note that by symmetry the same follows if $z \in V_2$.
Thus, every vertex $z \in Z$ has at least one red and one blue neighbour. 
Let $zz'$ be an edge of $G[Z]$. If we colour $z$ blue and $z'$ red, then $z$ has two red neighbours but is blue, a contradiction.
We conclude that the claim is true.
\end{claimproof}

\noindent
We now apply Lemma~\ref{l-pmc-monocol}. This takes polynomial time. If this leads to a no-answer, then we may discard the branch. Otherwise we return the found perfect red-blue colouring of $G$.

\medskip
\noindent
If one of the considered branches lead to a perfect red-blue colouring, we are done. Otherwise we may assume that there does not exist a perfect red-blue colouring of $G$ with two edges $u_1v_1, u_2v_2 \in E$, where $u_1,u_2 \in V_1$ and $v_1,v_2 \in V_2$, such that $u_1$ and $v_2$ are coloured red and $v_1$ and $u_2$ are coloured blue.
That is, Phase~1 failed and we start Phase~2.

\medskip
\noindent\textbf{Phase 2.} We branch over all options of choosing an edge $uv \in E$, with $u \in V_1$ and $v \in V_2$. We colour $u$ red and $v$ blue.
We define a precoloured pair $(S,T)$ with $S = \{u\}$ and $T = \{v\}$.
We apply rules R1--R4 exhaustively. If we find a no-answer, then we discard the option by Lemma~\ref{l-pmc-precol}. Else we may assume that $G$ is colour-processed.

Consider an uncoloured vertex $z \in V_1$. By Observation~\ref{o-pr-all-bipdiam}, we know that $z$ has a common neighbour $z'$ with $u$. Note that $z'$ was coloured red by propagation. 
If we colour $z$ blue,  then the existence of a perfect red-blue colouring of $G$ where we have coloured the two edges $zz'$ and $uv$ as described above would contradict the assumption that Phase~1 failed, see Figure~\ref{f-p-pmc-bipdiam3}~(middle).
Thus, $z$ has to be red, see Figure~\ref{f-p-pmc-bipdiam3}~(right).
Similarly, if we consider an uncoloured vertex $z \in V_2$ we conclude that $z$ has to be blue.
Hence, we colour all remaining vertices of $G$ and check in polynomial time if this colouring is a perfect red-blue colouring. 
If this is the case, we are done, otherwise we discard the branch and consider the next.

\medskip
\noindent
If, after considering all branches in Phase~2, we did not find a perfect red-blue colouring, then $G$ does not have a perfect red-blue colouring and thus no perfect matching cut.
The correctness of our algorithm follows from its description. As the total number of branches is $O(n^4)$ and we can process each branch in polynomial time, the total running time of our algorithm is polynomial. Hence, we have proven the theorem.
\end{proof}

\noindent
For \dc{} we can show the same result, which is however best possible, see Theorem~\ref{t-dc-dc-biprad3}.
In the case of $d=1$, this was already known from~\cite{LL19}, our proof is an alternative proof.

\begin{theorem}\label{t-dc-dc-bipdiam3}
 For $d\geq 1$, \dc{} is polynomial-time solvable for bipartite graphs of diameter at most~$3$.
 \end{theorem}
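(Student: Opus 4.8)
The plan is to mimic the two-phase structure of the proof of Theorem~\ref{t-pmc-bipdiam3}, replacing ``perfect red-blue colouring'' by ``red-blue $d$-colouring'' and using Observation~\ref{o-cut-colouring} to reduce \dc{} to deciding whether $G$ has a red-blue $d$-colouring. Let $G=(V,E)$ be a connected bipartite graph of diameter at most~$3$ with partition classes $V_1,V_2$, and recall from Observation~\ref{o-pr-all-bipdiam} that any two vertices of the same class have a common neighbour. The first reductions are the usual ones: if $\min\{|V_1|,|V_2|\}\le 2d$ (or more precisely if one class is small enough that Observation~\ref{o-c-all-kr-mono}-type arguments do not force monochromaticity), we may handle the instance by brute force over the $O(n^{O(d)})$ colourings of that class; so we may assume both classes are large.

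\textbf{Phase 1.} Guess a small ``witness'' of bichromaticity on each side: branch over all $O(n^{O(d)})$ ways of choosing, for some subset of up to $d+1$ edges incident with a prospective red vertex $u_1\in V_1$ and up to $d+1$ edges incident with a prospective blue vertex $u_2\in V_1$ (and symmetrically in $V_2$), and colour their endpoints accordingly, forming a precoloured pair $(S,T)$. Apply rules R1--R2 exhaustively using Lemma~\ref{l-all-precol}; discard the branch on a {\tt no}-answer, otherwise obtain a colour-processed graph with uncoloured set $Z$. The key claim, proved exactly as in Theorem~\ref{t-pmc-bipdiam3}, is that in the relevant branch every component of $G[Z]$ is monochromatic: by Observation~\ref{o-pr-all-bipdiam} every $z\in Z$ has a common neighbour with each of the precoloured ``red anchor'' and ``blue anchor'' vertices, and colour-processing (R2) forces that common neighbour to inherit the anchor's colour, so $z$ sees both colours; if an edge $zz'$ of $G[Z]$ were bichromatic we would push a vertex over its budget of $d$ opposite-coloured neighbours. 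Once $G[Z]$ must be monochromatic per component, we need a $d$-cut analogue of Lemma~\ref{l-pmc-monocol}: each component $C$ of $G[Z]$ gets a single colour, and the constraint ``each coloured vertex has $\le d$ opposite neighbours'' together with ``each vertex of $C$ has $\le d$ opposite neighbours'' becomes a 2-list-colouring / \twosat{} instance on the components (a component is forbidden a colour if choosing it would violate somebody's budget, which can be checked locally), solvable in polynomial time.

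\textbf{Phase 2.} If Phase~1 fails for every branch, branch over all $O(n)$ edges $uv$ with $u\in V_1$, $v\in V_2$, colour $u$ red and $v$ blue, set $(S,T)=(\{u\},\{v\})$, and colour-process via Lemma~\ref{l-all-precol}. For an uncoloured $z\in V_1$, Observation~\ref{o-pr-all-bipdiam} gives a common neighbour $z'$ of $z$ and $u$; since $u\in S$, if $z'$ already has $d$ red neighbours then $z'$ is forced red by R2, but in general we argue as in the $d=1$ case that colouring $z$ blue would create a Phase~1 witness (an anchor configuration with the bichromatic edge $zz'$), contradicting that Phase~1 failed --- here I expect the bookkeeping to be slightly more delicate than for $d=1$, because the ``witness'' Phase~1 was designed to find must be exactly the configuration that a blue $z$ produces, so the set of edges precoloured in Phase~1 must be chosen to match. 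Granting that, every uncoloured vertex of $V_1$ is forced red and every uncoloured vertex of $V_2$ forced blue, so the colouring is completed deterministically and checked in polynomial time.

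\textbf{Main obstacle.} The delicate point is Phase~2: for general $d$ one must set up Phase~1's branching so that the configuration forced when an uncoloured $z$ takes the ``wrong'' colour is literally one of the precoloured patterns Phase~1 enumerated --- this is why Phase~1 branches over up-to-$(d+1)$-edge bundles at each anchor rather than single edges. A secondary technical point is proving the $d$-cut version of Lemma~\ref{l-pmc-monocol} (monochromatic-components case solvable in polynomial time); I expect this to go through by reduction to \twosat{} much as in~\cite{LPR23a}, with the per-component colour restrictions read off from the budgets of the already-coloured neighbours. Since there are $O(n^{O(d)})$ branches and each is processed in polynomial time (for fixed $d$), the total running time is polynomial, proving the theorem.
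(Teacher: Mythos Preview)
Your two-phase plan does not survive the passage from $d=1$ to $d\ge 2$; the monochromatic-components claim in Phase~1 fails on both of its steps. First, rule~R2 only moves a vertex into $S$ (resp.\ $T$) when it has $d{+}1$ neighbours there, so the common neighbour of an uncoloured $z$ and a single red anchor is \emph{not} forced red by colour-processing; in the \pmc{} proof this worked because rule~R4 propagates from a single neighbour, but R4 is not sound for $d$-cuts. Precolouring ``up to $d{+}1$ edges'' at each anchor does not help, since the common neighbour guaranteed by Observation~\ref{o-pr-all-bipdiam} need not be among the precoloured vertices. Second, even if every $z\in Z$ did see one red and one blue neighbour, a bichromatic edge $zz'$ of $G[Z]$ gives $z$ only a \emph{second} neighbour of the opposite colour, which violates nothing when $d\ge 2$. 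So $G[Z]$ need not split into monochromatic components, and the whole Phase~1/Phase~2 architecture collapses: there is no small ``witness'' whose failure forces the deterministic completion you describe in Phase~2.

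The paper's proof abandons the two-phase template entirely and instead runs an iterated branching scheme driven by the structure of $G[Z]$. After guessing the colouring of $N(v)$ for one vertex $v$ and colour-processing, the uncoloured vertices form a bipartite graph $G[Z]$ with parts $A$ (distance~$2$ from $v$) and $B$ (distance~$3$). If $G[Z]$ is disconnected, Observation~\ref{o-pr-all-bipdiam} forces the common neighbours between vertices of $A$ lying in different components to be already coloured; since each uncoloured vertex has at most $2d$ coloured neighbours, this bounds the number of such common neighbours, and one branches over the $O(n^{O(d^2)})$ colourings of their $Z$-neighbourhoods to colour all of $A$, then repeat for $B$. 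If $G[Z]$ is connected, one finds (after a further small branching) a dominating set of $G[Z]$ of bounded size, branches over its colouring, and colour-processes; this guarantees every still-uncoloured vertex gains a new coloured neighbour, so after at most $2d$ rounds every vertex has $>2d$ coloured neighbours and is forced by R2. The total number of branches is $n^{O(d^3)}$. There is no analogue of Lemma~\ref{l-pmc-monocol} or a \twosat{} step; the argument is pure bounded branching.
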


\begin{proof}
Let $G=(V,E)$ be a bipartite graph of diameter at most~$3$. Note that this implies that $G$ is connected.
We use Observation~\ref{o-cut-colouring} and try to construct a red-blue $d$-colouring of~$G$.
Let $v\in V$. Without loss of generality, we may colour $v$ blue.
Since $v$ has at most $d$ red neighbours in any red-blue $d$-colouring, we can branch over all $O(n^{d})$ options to colour the neighbourhood of $v$.
We consider each option separately.

Let $R, B \subseteq V$ be the red and the blue set, respectively.
We define a precoloured pair $(S',T')$, where $S' = \{u\; |\; u \in N(v), u \in R\}$ and  $T' = \{v\} \cup  \{u\; |\; u \in N(v), u \in B\}$. We use Lemma~\ref{l-all-precol} to colour-process $(S',T')$ in polynomial time, resulting in a pair $(S,T)$.
Let $Z = V\setminus (S \cup T)$ be the set of uncoloured vertices.
As $(S,T)$ is colour-processed, every vertex in $Z$ has at most $d$ red and $d$ blue neighbours and thus at most $2d$ coloured neighbours. 
Note further that $G[Z]$ and thus, every connected component of $G[Z]$ is bipartite.
%We may assume that both partition classes of $G[Z]$ contain at least $2$ vertices each. Otherwise we may branch over all colourings for that partition class. 
Let $A$ be the set of uncoloured vertices at distance $2$ from $v$ and let $B$ be the set of uncoloured vertices at distance $3$ from $v$. Hence, $A$ and $B$ are the partition classes of $G[Z]$. 
We distinguish between the following cases:

\medskip
\noindent
{\bf Case 1.} $G[Z]$ consists of at least two connected components.\\
Let $Z_1, \dots, Z_r$ be the connected components of $G[Z]$.
Consider first the case of two components each of size $1$. In this case we branch over all colourings of $Z$.
Otherwise we may assume without loss of generality that $|A| \geq 2$ and that $A$ contains vertices in at least two connected components of $G[Z]$, say $Z_1$ and $Z_2$. 

Let $v_1 \in Z_1^A$ and $v_2 \in Z_2^A$, where $Z_i^A = Z_i \cap A$, for $i =1,2$.
By Observation~\ref{o-pr-all-bipdiam}, $v_1$ has a common neighbour with every vertex in $Z_2^A, \dots, Z_r^A$. 
Let $N_1$ be the set of common neighbours of $v_1$ and $Z_2^A, \dots, Z_r^A$.
Then, $N_1 \subseteq S \cup T$ and thus, $|N_1| \leq 2d$, since $v_1$ has at most $2d$ coloured neighbours.
Similarly, let $N_2$ be the set of common neighbours of $v_2$ and $Z_1^A$. Then, $|N_2| \leq 2d$ as well.
Hence, we branch over all $O(n^{4d^2})$ options to colour $N_{G[Z]}(N_1 \cup N_2) = A$.

Therefore only the vertices in $B$ remain uncoloured.
If $|B| = 1$ we branch over the two options of colouring $B$. Otherwise, since $B$ is an independent set and everything except $B$ is coloured, we can apply the arguments we used before to colour $A$.
This leads to another $O(n^{4d^2})$ branches and the set $B$ will be coloured.

\medskip
\noindent
{\bf Case 2.} $G[Z]$ is connected.\\ 
We first assume that there does not exist a vertex $u \in A$ such that $\dist_{G[Z]}(u, z) \leq 2$ for all $z \in A$.
Let $u \in A$ and let $U^A = \{z \in A : \dist_{G[Z]}(u,z) > 2\}$. Note that, by assumption, $U^A \neq \emptyset$.
By Observation~\ref{o-pr-all-bipdiam} we find that $u$ has a common neighbour with every vertex in $U^A$. These common neighbours are not contained in $Z$ and thus they are in $S \cup T$, the set of coloured vertices.
Since $u$ is uncoloured, it has at most $2d$ coloured neighbours. Each of them has at most $d$ neighbours of the other colour.  So we can branch over all $O(n^{2d^2})$ options to colour $U^A$.
If any colouring of $U^A$ leads to a vertex having more than $d$ neighbours of the other colour, we discard the branch.

If there is no vertex $v \in B$ such that $\dist_{G[Z]}(v, z) \leq 2$ for all $z \in B$, then we pick any vertex $v \in B$, define the set $U^B = \{z \in B : \dist_{G[Z]}(u,z) > 2\}$ and proceed as in the case for $A$.
Thus, we may now assume that both $U^A$ and $U^B$ are coloured and that the graph $G[Z \setminus (U^A \cup U^B)]$ remains uncoloured.  % and we know that for the previously chosen $u$ and $v $ we have that $\dist_{G[Z]}(u, z) \leq 2$ for all $z \in Z^1$ and $\dist_{G[Z]}(v, z) \leq 2$ for all $z \in Z^2$.
Let $Z = Z \setminus  (U^A \cup U^B)$, that is we update $Z$ such that it contains only uncoloured vertices.

So we may assume (possibly after some branching) that there exist vertices $u \in A$ and $v \in B$ such that $\dist_{G[Z]}(u, z) \leq 2$ for all $z \in A$ and $\dist_{G[Z]}(v, z) \leq 2$ for all $z \in B$.
We branch over all $O(n^{2d})$ colourings of $\{u\} \cup N_{G[Z]}(u)$ and $\{v\} \cup N_{G[Z]}(v)$.
Due to our assumption on $u$ and $v$, we get that $\{u,v\} \cup N_{G[Z]}(u)\cup N_{G[Z]}(v)$ is a dominating set of $G[Z]$.
Hence, every uncoloured vertex got a new coloured neighbour. 
We colour-process the pair of red and blue sets, which takes polynomial time by Lemma~\ref{l-all-precol}. 
This results in a new uncoloured set of vertices $Z'$.
If there is a coloured vertex with $d+1$ neighbours of the other colour or an uncoloured vertex with $d+1$ neighbours of each colour we discard the branch. Otherwise we proceed by choosing either Case~1 or Case~2.

\medskip
\noindent
If at some point we apply Case~1, we are done. Every time we apply Case~2, all vertices that remain uncoloured get a new coloured neighbour. 
Hence, we can apply Case~2 at most $2d$ times, since afterwards every uncoloured vertex has more than $2d$ coloured neighbours and thus, they will be coloured by colour-processing.

The correctness of our algorithm follows from its description. We now discuss its running time.
We always have $O(n^d)$ branches in the beginning. Case 1 leads to at most $O(n^{8d^2})$ branches and Case 2 to $O(n^{4d^2 + 2d})$ branches. After one application of Case 1 the whole graph is coloured.
Hence, in the worst case we apply Case~2 $2d$ times before applying Case~1 once. Thus, we obtain
$$O\left(n^d \cdot \left(n^{8d^2}\right)\cdot  \left(n^{4d^2+2d}\right)^{^{2d}}  \right) = O\left(n^{8d^3+ 12d^2 + d}\right)$$
 branches to consider in the worst case. Since by Lemma~\ref{l-all-precol} the colour-processing can be done in polynomial time, the algorithm therefore runs in polynomial time.
\end{proof}

\noindent
We can adjust this proof to hold for \mmc{} by the following small change.  We consider the proof for $d=1$ and remember the value of every valid red-blue colouring that we get. In the end, we pick the valid red-blue colouring with the largest value to obtain a maximum valid red-blue colouring of $G$. This leads to the following theorem.

\begin{theorem}\label{t-p-mmc-bipdiam3}
\mmc{} is solvable in polynomial time for bipartite graphs of diameter at most~$3$.
\end{theorem}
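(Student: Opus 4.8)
The plan is to run the branching algorithm of Theorem~\ref{t-dc-dc-bipdiam3} with $d=1$ and add only bookkeeping. By the first item of Observation~\ref{o-cut-colouring}, the maximum size of a matching cut of $G$ equals the maximum value $\nu$ of a valid red-blue colouring of $G$, and $G$ has a matching cut if and only if it has a valid red-blue colouring at all; so it suffices to produce a valid red-blue colouring of value $\nu$ (note that $G$, having diameter at most~$3$, is connected, so every valid red-blue colouring already has value at least~$1$). I would therefore execute the algorithm of Theorem~\ref{t-dc-dc-bipdiam3} verbatim for $d=1$. Every branch that is not discarded terminates, after at most two applications of Case~2 followed by one application of Case~1, with a \emph{complete} valid red-blue colouring of $G$; instead of halting at the first such colouring, I would record its value at every non-discarded leaf and, at the end, output a recorded colouring of largest value together with the matching cut it induces via Observation~\ref{o-cut-colouring} (and report that $G$ has no matching cut if no leaf yields a valid colouring). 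Computing the value of a colouring is immediate and there are only polynomially many leaves, so the running time is the same as in Theorem~\ref{t-dc-dc-bipdiam3}, namely polynomial (for $d=1$ the branch bound there reads $O(n^{21})$).

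The only nontrivial point is the correctness of the maximisation, and it reduces to showing that the branching is \emph{exhaustive}: for every valid red-blue colouring $c$ of $G$ in which the vertex $v$ picked at the start is coloured blue, some leaf of the recursion tree produces exactly $c$. The opening step ``colour $v$ blue'' then loses nothing, because swapping the two colours of a valid red-blue colouring changes neither its value nor the edge set it induces, so if the optimum colours $v$ red we recover its (equally good) colour swap. Every later step is either the colour-processing of Lemma~\ref{l-all-precol}, which is deterministic and moves a vertex to $S$ or $T$ only when it is forced there, hence does not exclude $c$, or else a step of the form ``branch over all $O(n^{f(d)})$ colourings of a set $W$''. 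For each such step one checks that the enumerated family contains $c|_W$: the set $W$ is always dominated (via Observation~\ref{o-pr-all-bipdiam}) by a set $X$ of already-coloured vertices with $|X| = O(d)$, and since in a red-blue $1$-colouring every coloured vertex has at most one neighbour of the opposite colour, $c|_W$ agrees on all but $O(|X|) = O(d)$ vertices with the ``default'' extension in which each vertex of $W$ copies the colour of a fixed neighbour in $X$; enumerating the colouring of $X$ and the $O(d)$ exceptional vertices of $W$ therefore produces $c|_W$. Carrying this out for the concrete sets used in the algorithm --- the sets $A$ and $B$ in Case~1 (dominated by $N_1 \cup N_2$), and the sets $U^A$, $U^B$ and the neighbourhoods $N_{G[Z]}(u)$, $N_{G[Z]}(v)$ in Case~2 --- is the bulk of the argument, but it is precisely the counting already performed in the proof of Theorem~\ref{t-dc-dc-bipdiam3}, now read as an exhaustiveness statement.

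Granting exhaustiveness, every valid red-blue colouring of $G$ (up to the value-preserving colour swap) occurs at a non-discarded leaf, so the largest recorded value equals $\nu$, while conversely every recorded colouring is valid and hence of value at most $\nu$; by Observation~\ref{o-cut-colouring} the returned colouring is a matching cut of $G$ of maximum size. I expect the main obstacle to be expository rather than conceptual: rephrasing the per-step branch counts of Theorem~\ref{t-dc-dc-bipdiam3} as the assertion ``the enumerated colourings of $W$ include the restriction of every valid colouring of $G$'', and verifying that, apart from the harmless choice of $v$'s colour, no step of the algorithm pins down a colour in a way that could rule out an optimal colouring.
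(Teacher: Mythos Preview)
Your proposal is correct and is exactly the approach the paper takes: the paper's own proof of this theorem is a two-sentence remark saying to run the algorithm of Theorem~\ref{t-dc-dc-bipdiam3} with $d=1$, remember the value of every valid red-blue colouring produced, and output one of largest value. Your write-up is in fact more careful than the paper's, since you spell out the exhaustiveness property (that every valid red-blue colouring of $G$ appears at some leaf of the branching tree) that the paper tacitly relies on but does not state.
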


\noindent
Similarly, we can modify the proof of Theorem~\ref{t-dc-dc-bipdiam3} to also hold for \mdpm.
We only make the following change.
Consider the proof for $d=1$.
Recall that every maximum perfect extendable red-blue colouring is also a valid red-blue colouring. 
Check for every valid red-blue colouring obtained by the algorithm if it is also a perfect extendable red-blue colouring. 
If this is the case we remember its value.
In the end, we pick the perfect extendable red-blue colouring with the largest value to obtain a maximum perfect extendable red-blue colouring. This leads to the following theorem.

\begin{theorem}\label{t-p-mdpm-bipdiam3}
\mdpm{} is solvable in polynomial time for bipartite graphs of diameter at most~$3$.
\end{theorem}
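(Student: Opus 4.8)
The plan is to mirror exactly the structure of the proof of Theorem~\ref{t-dc-dc-bipdiam3} with $d=1$, treating it as a black box that, given a connected bipartite graph $G$ of diameter at most $3$, enumerates in polynomial time a polynomial-size family $\mathcal{F}$ of red-blue $1$-colourings (equivalently valid red-blue colourings) of $G$ with the property that $G$ has a valid red-blue colouring if and only if $\mathcal{F}\neq\emptyset$, and moreover every valid red-blue colouring of $G$ is ``captured'' by a branch of the algorithm. The only genuinely new ingredient I need is the observation already stated just before Theorem~\ref{t-p-mdpm-bipdiam3}: every perfect extendable red-blue colouring is in particular a valid red-blue colouring. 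Hence the maximum perfect extendable red-blue colouring, if one exists, occurs among the colourings the $d=1$ algorithm would produce — provided the algorithm is set up to produce \emph{all} valid red-blue colourings up to the relevant equivalence, not merely one.

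The first step is therefore to check that the $d=1$ algorithm of Theorem~\ref{t-dc-dc-bipdiam3} does exhaustively branch: it fixes a vertex $v$, branches over all colourings of $N(v)$ (there are $O(n)$ of them for $d=1$), then applies colour-processing via Lemma~\ref{l-all-precol}, and in Cases~1 and~2 branches over all colourings of certain bounded-size neighbourhood sets. Every branch that is not discarded yields a concrete red-blue colouring, and crucially every valid red-blue colouring of $G$ agrees with the precolouring of at least one undiscarded branch (colour-processing only makes forced moves). So the family $\mathcal{F}$ of colourings surviving to the end of the algorithm contains, in particular, every maximum valid red-blue colouring, hence every maximum perfect extendable red-blue colouring.

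The second step is the post-processing: for each colouring in $\mathcal{F}$, test in polynomial time whether it is perfect extendable, i.e.\ whether the set of bichromatic edges extends to a perfect matching of $G$. By Observation~\ref{o-cut-colouring} this is exactly the condition that the colouring corresponds to a disconnected perfect matching; concretely, one deletes the bichromatic edges together with their endvertices and checks whether the remaining graph (which is monochromatic on each side) has a perfect matching, a single maximum-matching computation. Among those colourings in $\mathcal{F}$ that pass this test, return one of maximum value $\nu$; by the previous paragraph this value equals the size of a maximum disconnected perfect matching, and if none passes, report that $G$ has no disconnected perfect matching. Since $|\mathcal{F}|$ is polynomial and each test is polynomial, the whole procedure is polynomial.

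The main obstacle — and the only point needing care — is the exhaustiveness claim in the first step: one must be sure that the branching in Theorem~\ref{t-dc-dc-bipdiam3} never collapses two valid colourings that differ in value into a single representative, nor prunes a branch that could still lead to a perfect extendable colouring. This is fine because every pruning step in that proof discards a branch only when \emph{no} valid red-blue colouring is consistent with its precolouring (R1, or a vertex acquiring $d+1$ neighbours of the other colour), and the forced moves R2--R4 and Lemma~\ref{l-all-precol} are sound; so no valid colouring is ever lost. The analogous adjustment for \mmc{} in Theorem~\ref{t-p-mmc-bipdiam3} is of exactly this form and is asserted there without further comment, so I would keep the write-up equally brief, stating the two modifications (record values; test perfect-extendability using a matching computation) and invoking the correctness and running-time analysis of Theorem~\ref{t-dc-dc-bipdiam3} verbatim.
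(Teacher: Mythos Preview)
Your proposal is correct and follows essentially the same approach as the paper: run the $d=1$ algorithm of Theorem~\ref{t-dc-dc-bipdiam3}, keep every valid red-blue colouring it produces, test each for perfect-extendability, and return one of maximum value. The paper states this in a single short paragraph; your write-up supplies the exhaustiveness justification that the paper leaves implicit. One small slip: the colour-processing in Theorem~\ref{t-dc-dc-bipdiam3} uses only R1 and R2 via Lemma~\ref{l-all-precol}, not R3--R4 (those are reserved for perfect colourings), so drop the reference to R3--R4 when arguing soundness.
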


\noindent
It has been shown in~\cite{LPR22} that \mc{} is solvable in polynomial time for graphs of radius at most~$2$. \dc{}, for $d \geq 2$, in contrary is \NP-complete in this case, see~\cite{LMPS24}. However, for bipartite graphs we can show that \dc{} is solvable in polynomial time when the radius is at most~$2$. This result is best possible, see Theorem~\ref{t-dc-dc-biprad3}. The following proof works also for $d=1$ and is an alternative proof in that case.

\begin{theorem}\label{t-p-dc-biprad2}
For $d \geq 1$, \dc{} is polynomial-time solvable for bipartite graphs of radius at most~$2$.
\end{theorem}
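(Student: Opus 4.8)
The plan is to mimic the structure of the proof of Theorem~\ref{t-dc-dc-bipdiam3}, exploiting that a bipartite graph of radius at most~$2$ has a center vertex $v$ whose closed neighbourhood already dominates one partition class, and whose second neighbourhood is the rest of the graph. First I would fix a center vertex~$v$ and, without loss of generality, colour it blue. Since in any red-blue $d$-colouring $v$ has at most $d$ red neighbours, I branch over all $O(n^d)$ ways to split $N(v)$ into a red part and a blue part; each choice gives a precoloured pair $(S',T')$ with $v$ and the blue neighbours in $T'$ and the red neighbours in $S'$. I then colour-process $(S',T')$ using Lemma~\ref{l-all-precol}, obtaining $(S,T)$ in polynomial time, or discarding the branch if it returns {\tt no}. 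Let $Z = V \setminus (S\cup T)$. As in the diameter~$3$ proof, every vertex of $Z$ now has at most $2d$ coloured neighbours, and $G[Z]$ is bipartite; write its partition classes as $A$ (the uncoloured vertices in the partition class of $G$ not containing $v$) and $B$ (the uncoloured vertices in the same class as $v$). Note $A \subseteq N(v)$'s "sibling" class, but more usefully: since $v$ is a center, every vertex of $Z$ is within distance~$2$ of~$v$ in~$G$, and hence every vertex of $B$ has a neighbour in $N(v)$, i.e.\ a coloured neighbour — so after colour-processing, actually $B = \emptyset$ unless $v$'s neighbours were not enough; in general $A$ consists of vertices at distance~$2$ from~$v$, and every such vertex has a coloured neighbour in $N(v)$.

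The key structural point is that $A$ is the set of remaining uncoloured vertices one partition class, while $B$ — the uncoloured vertices in $v$'s own class, other than $v$ — are at distance exactly~$2$ from~$v$ through some vertex of $A$. I would first handle $B$: every $b \in B$ has all its neighbours in $A \cup (\text{coloured})$, so $B$ is "one layer out" from $A$. If $A$ is already coloured then $B$ is an independent set with all its other neighbours coloured, and we can apply Lemma~\ref{l-all-precol} directly (or branch on each $b$ of which there are at most $O(n^{2d})$ relevant ones). So the crux is colouring $A$. Here I use that $A$ lies in one partition class: by bipartiteness $A$ is an independent set. Thus once we are in the situation where $V \setminus (S\cup T)$ is an independent set, we are done — but wait, $B$ need not be empty. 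The cleaner route: observe that $A$ and $B$ together need not be independent, but $A$ \emph{is} independent and so is $B$. I would therefore branch to colour $A$ first. To do so efficiently I would use an argument like in Case~1/Case~2 of Theorem~\ref{t-dc-dc-bipdiam3}: pick $u \in A$; every $w \in A$ with $\dist_{G[Z]}(u,w) > 2$ shares a common neighbour with $u$ (this uses that two vertices in the same partition class of a diameter-$\le 4$ bipartite graph have a common neighbour — and a radius-$2$ graph has diameter at most~$4$), and that common neighbour is coloured, so $u$'s $\le 2d$ coloured neighbours, each with $\le d$ neighbours of the other colour, bound the "far" part of $A$; after $O(n^{2d^2})$ branches the remaining uncoloured part of $A$ is within distance~$2$ of~$u$ in $G[Z]$, so branching over the $\le 2d$ neighbours of $u$'s neighbours (another $O(n^{2d})$ branches, or rather $O(n^{2d^2})$) dominates it.

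The main obstacle I anticipate is exactly this last domination step for $A$: after colour-processing, the graph $G[Z]$ may still have $A$ spread over many components, so I will need the Observation~\ref{o-pr-all-bipdiam}-style "common neighbour" fact in the radius-$2$ (hence diameter at most~$4$) setting, and must be careful that two vertices of the \emph{same} partition class have a common neighbour — which in a bipartite graph of diameter at most~$4$ holds, since their distance is even and at most~$4$, so either they are adjacent to a common vertex at distance~$1$ or joined by a path of length~$4$, and I must rule the latter out or show it still yields the needed bounded-size separator. I expect the argument splits into the same two cases as Theorem~\ref{t-dc-dc-bipdiam3} (whether some vertex of $A$ dominates $A$ within distance~$2$ in $G[Z]$, and likewise for $B$), giving polynomially many branches, each reducible by one more round of colour-processing and a bounded recursion depth of $2d$. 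Once $Z$ has been reduced to an independent set — which for bipartite radius-$2$ graphs happens after colouring~$A$, since then only $B$ remains and $B$ is independent — we finish by a direct application of Lemma~\ref{l-all-precol}. Summing the branching factors over the at most $2d$ rounds gives a bound of the form $O(n^{f(d)})$ for a polynomial $f$, establishing the theorem; I would then remark, as the paper does for the diameter-$3$ case, that the $d=1$ instance is handled by the same algorithm and gives an alternative proof of the known result of~\cite{LPR22} restricted to bipartite graphs.
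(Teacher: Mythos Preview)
Your proposal misses the key structural observation that makes the paper's proof almost trivial, and in trying to route around it you import machinery that does not apply here.

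The missing idea: in a \emph{bipartite} graph of radius at most~$2$ with center $v$, the partition class not containing $v$ is \emph{exactly} $N(v)$. Indeed, any vertex in that class has odd distance from $v$, hence distance~$1$ or at least~$3$; radius~$2$ rules out the latter. Consequently, after your very first branching step (guessing the at most~$d$ red neighbours of $v$), the set $A$ of uncoloured vertices in the other partition class is already empty --- there is nothing left to do on that side. The only uncoloured vertices lie in $v$'s own class, form an independent set, and have all their neighbours in the fully coloured set $N(v)$. The paper then branches once more on the neighbourhoods of the at most $d$ red vertices in $N(v)$ (an $O(n^{d^2})$ branching), after which every still-uncoloured vertex has \emph{only blue} neighbours, so it can be safely coloured blue. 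That is the whole proof.

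Because you do not see that $A=\emptyset$, you set up an elaborate Case~1/Case~2 argument to colour $A$, adapted from the diameter-$3$ proof. That argument, however, leans on Observation~\ref{o-pr-all-bipdiam}: two vertices in the same partition class have a common neighbour. This holds for diameter at most~$3$ but \emph{fails} for diameter~$4$ (which is all that radius~$2$ guarantees): two vertices in the same class can be joined only by a path of length~$4$. You flag this yourself (``I must rule the latter out or show it still yields the needed bounded-size separator'') but never resolve it, and in fact there is no bounded-size separator in general. So your proposed route both overcomplicates the argument and contains a genuine gap at its core step. Recognising that one bipartition side equals $N(v)$ dissolves all of this.
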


\begin{proof}
Let $G = (V,E)$ be a bipartite graph of radius at most~$2$ on $n$ vertices. Note that this implies that $G$ is connected.
By Observation~\ref{o-cut-colouring}, it suffices to find a red-blue $d$-colouring of $G$.
Let $v \in V$ be a center vertex, that is a vertex at distance at most~$2$ to every other vertex in $V$. We can find $v$ in polynomial time.
Without loss of generality, we may colour $v$ blue.
Since $v$ has at most $d$ red neighbours in any red-blue $d$-colouring, we branch over all $O(n^d)$ options to colour the neighbourhood of $v$. 
In the case where we colour all neighbours of $v$ blue, we branch over all $O(n)$ options to colour one of the remaining vertices red.
We further branch over all $O(n^{d^2})$ options of colouring the neighbourhood of the at most $d$ red vertices.

Since $G$ has radius~$2$, every uncoloured vertex is adjacent to a vertex in $N(v)$. Further, since the neighbourhood of all red vertices in $N(v)$ is coloured, every uncoloured vertex has neighbours only in $N(v)$ of which at least one is blue and, since $G$ is bipartite, none is red.
Thus, we can safely colour all uncoloured vertices blue.
If this leads to a contradiction, we discard the branch and consider the next, otherwise we found a red-blue $d$-colouring and thus a $d$-cut of~$G$.
The correctness of the algorithm follows from its description.
Every branch can be processed in polynomial time and we consider $O(n^{d^2 + d + 1})$ branches.
Hence, our algorithm runs in polynomial time.
\end{proof}

\begin{theorem}\label{t-p-mmc-biprad}
\mmc \ is solvable in polynomial time for bipartite graphs of radius at most~$2$.
\end{theorem}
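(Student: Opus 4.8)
The plan is to follow the strategy of the proof of Theorem~\ref{t-p-dc-biprad2} for $d=1$, but instead of colouring the uncoloured vertices greedily, to finish each branch by invoking Lemma~\ref{l-p-mmc-applplesnik}. By Observation~\ref{o-cut-colouring} it suffices to compute a maximum valid red-blue colouring of the graph $G=(V,E)$, which is connected since it has radius at most~$2$. Let $X$ and $Y$ be the partition classes and let $v$ be a center vertex; say $v\in X$. Since the red/blue roles are symmetric, I would restrict attention to valid red-blue colourings in which $v$ is blue (any colouring with $v$ red yields one with $v$ blue of the same value by swapping colours). Because $G$ is bipartite of radius at most~$2$ with $v\in X$, every vertex of $Y$ is at (odd) distance at most~$2$ from $v$, hence at distance exactly~$1$, so $Y=N(v)$; likewise $X=\{v\}\cup N_2(v)$, where $N_2(v)$ denotes the set of vertices at distance exactly~$2$ from $v$. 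In particular $N_2(v)$ is an independent set.

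Next I would branch over the colouring of $N(v)$. In any valid red-blue colouring with $v$ blue, $v$ has at most one red neighbour, so there are only $O(n)$ relevant options: either all of $N(v)$ is blue, or exactly one vertex $u\in N(v)$ is red and the remaining vertices of $N(v)$ are blue. In the first case, a valid red-blue colouring still has to use the colour red, and since $\{v\}\cup N(v)$ is entirely blue the red vertex must lie in $N_2(v)$; so I would additionally branch over the $O(n)$ choices of one such vertex $r\in N_2(v)$ and colour it red. In every branch this yields a precoloured pair $(S,T)$ with $v\in T$ (hence $T\neq\emptyset$) and $S\neq\emptyset$ (it contains $u$, respectively $r$). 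The key point is that every vertex that is still uncoloured lies in $N_2(v)$, so $V\setminus(S\cup T)$ is an independent set.

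Now Lemma~\ref{l-p-mmc-applplesnik} applies to each branch: in polynomial time it returns either a maximum valid red-blue $(S,T)$-colouring of $G$ or the conclusion that $G$ has no valid red-blue $(S,T)$-colouring. I would run it on every branch and output the colouring of largest value obtained, reporting that $G$ has no matching cut if every branch fails. For correctness, every colouring produced is a genuine valid red-blue colouring because $S$ and $T$ are both nonempty; conversely, any valid red-blue colouring of $G$, after possibly swapping colours so that $v$ is blue, is consistent with the precolouring of exactly one branch — the one determined by its restriction to $N(v)$ and, if needed, by any red vertex it places in $N_2(v)$ — and hence is a valid red-blue $(S,T)$-colouring for that branch, so its value is at most the value returned there. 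Thus the maximum over all branches equals the maximum value of a valid red-blue colouring of $G$. As there are $O(n)$ branches, each processed in polynomial time by Lemma~\ref{l-p-mmc-applplesnik}, the whole algorithm runs in polynomial time.

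The only delicate point — and the step I expect to require the most care — is the branching over $N(v)$: it is precisely what forces all remaining uncoloured vertices into the single independent class $N_2(v)$ so that Lemma~\ref{l-p-mmc-applplesnik} becomes applicable, and one must not forget the degenerate sub-case in which $N(v)$ is entirely blue, where a red vertex has to be guessed inside $N_2(v)$ to guarantee that the produced colouring uses both colours.
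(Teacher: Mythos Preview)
Your proposal is correct and follows essentially the same approach as the paper: fix a center vertex $v$, colour it blue, branch over the $O(n)$ colourings of $N(v)$ (with the extra guess of a red vertex in the all-blue sub-case), observe that the uncoloured vertices form an independent set, and finish each branch with Lemma~\ref{l-p-mmc-applplesnik}. The only cosmetic difference is that the paper additionally colour-processes the pair $(S,T)$ via Lemma~\ref{l-all-precol} before invoking Lemma~\ref{l-p-mmc-applplesnik}, but this is not needed for correctness since the latter lemma already handles arbitrary precoloured pairs with independent complement; your explicit use of the bipartition to get $Y=N(v)$ and $N_2(v)\subseteq X$ is in fact a slightly cleaner way to see the independence.
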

\begin{proof}
Let $G = (V,E)$ be a bipartite graph of radius at most~$2$ on $n$ vertices.
If $G$ has radius~$1$, then $G$ is a star and the problem is trivial to solve.
Assume that $G$ has radius~$2$.
By Observation~\ref{o-cut-colouring}, it suffices to find a maximum valid red-blue colouring of $G$.
Since $G$ has radius~$2$, there is a dominating star $D$ with center vertex $v$ and leafs $\{u_1, \dots, u_k\}$, for some integer $k \geq 1$.
Moreover, since $G$ is bipartite, this star is induced.
Without loss of generality, we may colour $v$ blue. At most one neighbour of $v$ can be blue in any maximum valid red-blue colouring of $G$, and thus, we branch over all $O(n)$ possible colourings of $N(v) \subseteq V(D)$.

Consider first the case where $D$ is monochromatic, say all vertices in $D$ are coloured blue.
By definition, a maximum valid red-blue colouring of $G$ has to contain at least one red vertex.
So we branch over all $O(n)$ options of choosing a vertex $w$ in $V-D$ to be coloured red.
Recall that $D$ is dominating so $w$ has at least one neighbour in $D$.

In both cases, regardless of wether $D$ is monochromatic, there is one red vertex and all other coloured vertices are coloured blue. Let $T$ be the set of blue vertices and $S$ the set of red vertices, then $(S,T)$ is a precoloured pair.

By Lemma~\ref{l-all-precol} we either find in polynomial time that $G$ has no valid red-blue $(S,T)$-colouring, and we discard the branch, or we colour-processed $G$.
Suppose the latter case holds.
Since $G$ is bipartite, we have that $N(V(D))$ is an independent set and thus, $Z = V\setminus(S \cup T)$ is independent. 
This allows us to apply Lemma~\ref{l-p-mmc-applplesnik} and we either find in polynomial time a red-blue colouring of $G$ that is a maximum red-blue $(S,T)$-colouring or that no such colouring exists. In the latter case, we discard the branch.

If somewhere in the above process we discarded a branch, we consider the next one. If we did not discard the branch, then we remember the value of the maximum red-blue $(S,T)$-colouring that we found. Afterwards, we pick one with the largest value to obtain a maximum valid red-blue colouring of $G$.

The correctness of our branching algorithm follows from its description. The running time is polynomial: each branch takes polynomial time to process, and the number of branches is $O(n)$. This completes our proof.
\end{proof}

\noindent
 The following result is proven in exactly the same way as Theorem~\ref{t-p-mmc-biprad} after replacing 
  Lemma~\ref{l-p-mmc-applplesnik} by Lemma~\ref{l-p-mdpm-applplesnik}.

\begin{theorem}\label{t-p-mdpm-biprad}
\mdpm \ is solvable in polynomial time for bipartite graphs of radius at most~$2$.
\end{theorem}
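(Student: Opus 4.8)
The plan is to reuse the branching algorithm from the proof of Theorem~\ref{t-p-mmc-biprad} almost verbatim, substituting Lemma~\ref{l-p-mdpm-applplesnik} for Lemma~\ref{l-p-mmc-applplesnik}. Let $G=(V,E)$ be bipartite of radius at most~$2$ on $n$ vertices; by Observation~\ref{o-cut-colouring} it suffices to compute a maximum perfect-extendable red-blue colouring of $G$, or to report that none exists. If the radius is~$1$ then $G$ is a star and the problem is trivial, so assume the radius equals~$2$ and fix a dominating star $D$ with center $v$ and leaf set $N(v)=\{u_1,\dots,u_k\}$, which is induced because $G$ is bipartite. We may colour $v$ blue. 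Since every perfect-extendable red-blue colouring is in particular a \emph{valid} red-blue colouring, $v$ has at most one red neighbour, so there are only $O(n)$ colourings of $N(v)$ and I would branch over all of them. In the branch where $V(D)$ is monochromatic (all blue), any red-blue colouring must still use the colour red, and since $V(D)$ dominates $V$ I would additionally branch over the $O(n)$ choices of a vertex $w\in V\setminus V(D)$ to be coloured red; such a $w$ has a neighbour in $D$.

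After this branching every branch has exactly one red vertex and all other coloured vertices blue, giving a precoloured pair $(S,T)$. First I would apply Lemma~\ref{l-all-precol} with $d=1$: either it reports that $G$ has no valid red-blue $(S,T)$-colouring, in which case $G$ has no perfect-extendable red-blue $(S,T)$-colouring either and the branch is discarded, or it returns a colour-processed graph (colour-processing only shrinks the uncoloured set). Exactly as in the proof of Theorem~\ref{t-p-mmc-biprad}, bipartiteness forces the set of vertices outside $D$ to be independent, hence the uncoloured set $Z=V\setminus(S\cup T)$ is independent — this is precisely the hypothesis of Lemma~\ref{l-p-mdpm-applplesnik}. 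Applying that lemma, I obtain in polynomial time either a maximum perfect-extendable red-blue $(S,T)$-colouring of $G$, or a certificate that none exists, in which case I discard the branch. Over all surviving branches I keep the returned colouring of largest value. By colour symmetry, any perfect-extendable red-blue colouring of $G$ can be recoloured so that $v$ is blue, and it then agrees with exactly one of the branched colourings of $N(v)$ (and of $w$ when $D$ is monochromatic), so the kept colouring is a maximum perfect-extendable red-blue colouring of $G$; if every branch is discarded, $G$ has no disconnected perfect matching. Since there are $O(n)$ branches in total (the monochromatic case contributing its $O(n)$ sub-branches) and each is processed in polynomial time by Lemmas~\ref{l-all-precol} and~\ref{l-p-mdpm-applplesnik}, the algorithm runs in polynomial time.

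I do not expect a genuine obstacle here: the whole content is to verify that the structural ingredients of the \mmc{} proof — the dominating induced star, the independence of the uncoloured set, and the soundness of the precolouring reduction and of rules R1 and R2 — all survive unchanged when one replaces "valid red-blue colouring" by "perfect-extendable red-blue colouring", which they do precisely because the latter is a special case of the former. The one place where the perfect-extendable variant is actually required is the per-branch subroutine on a graph whose uncoloured part is independent, and that is exactly what Lemma~\ref{l-p-mdpm-applplesnik} supplies.
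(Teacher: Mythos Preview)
Your proposal is correct and matches the paper's proof exactly: the paper simply states that Theorem~\ref{t-p-mdpm-biprad} is proven in the same way as Theorem~\ref{t-p-mmc-biprad} after replacing Lemma~\ref{l-p-mmc-applplesnik} by Lemma~\ref{l-p-mdpm-applplesnik}, which is precisely what you do. Your write-up is in fact slightly more careful than the paper's, since you make explicit why the branching is exhaustive (colour symmetry on $v$) and why discarding via Lemma~\ref{l-all-precol} is sound for the perfect-extendable variant.
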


\noindent
This implies that the same result holds for \dpm{} and therefore solves one of the two open cases for \dpm{} on bipartite graphs of bounded radius and diameter.

\begin{corollary}\label{c-p-dpm-biprad}
\dpm \ is solvable in polynomial time for bipartite graphs of radius at most~$2$.
\end{corollary}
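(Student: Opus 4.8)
The plan is to obtain this as an immediate consequence of Theorem~\ref{t-p-mdpm-biprad}, since \dpm{} is nothing but the decision version of \mdpm. First I would record that a bipartite graph $G$ of radius at most~$2$ is connected, so every edge cut of $G$, and in particular every matching cut, is non-empty; consequently, if $G$ has a disconnected perfect matching then it has one of size at least~$1$. By Observation~\ref{o-cut-colouring}, $G$ has a disconnected perfect matching of size $\nu$ if and only if $G$ has a perfect extendable red-blue colouring of value~$\nu$, and by the previous remark (again using connectivity, so that the red and blue classes cannot be separated by an empty edge set) any such colouring automatically has value at least~$1$. Hence $G$ has a disconnected perfect matching if and only if $G$ admits a perfect extendable red-blue colouring.

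Next I would simply invoke the algorithm of Theorem~\ref{t-p-mdpm-biprad} on the input graph $G$. That algorithm runs in polynomial time and either outputs a maximum perfect extendable red-blue colouring of $G$ or correctly reports that $G$ has none. In the first case we return \yes{} (the colouring it outputs witnesses a disconnected perfect matching), and in the second case we return \no{} by the equivalence established above. The running time is polynomial because the whole procedure consists of a single call to the algorithm of Theorem~\ref{t-p-mdpm-biprad} followed by an inspection of its output.

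There is no real obstacle here: the only thing that needs to be checked is that the maximization problem \mdpm{} and the decision problem \dpm{} ask about the same objects, i.e.\ that the "maximum" in \mdpm{} is taken over a family that is non-empty exactly when \dpm{} has answer \yes. This is precisely what the connectivity of bipartite graphs of radius at most~$2$ guarantees, together with Observation~\ref{o-cut-colouring}, so the corollary follows.
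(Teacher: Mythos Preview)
Your proposal is correct and takes essentially the same approach as the paper: the corollary is stated immediately after Theorem~\ref{t-p-mdpm-biprad} and is justified simply by the observation that \dpm{} is the decision version of \mdpm{}, so a polynomial-time algorithm for the latter directly yields one for the former. Your additional remarks about connectivity and Observation~\ref{o-cut-colouring} are correct but more detailed than what the paper spells out.
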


\section{Hardness Results}\label{s-hard}
In this section we give our \NP-hardness results for \pmc, \dc{} and \mmc.
The first two reductions are from \naesat.
In this problem we are given clauses $C_1, \dots, C_m$ over variables $x_1, \dots, x_n$.
We ask whether a truth assignment exists, such that in every clause there is at least one true and one false literal.
This is well known to be \NP-complete, even under the following restrictions.

\begin{theorem}[\cite{Sc78}]\label{t-np-none-naesat}
\naesat{} is \NP-complete even for instances, in which each literal occurs only positively and
\begin{itemize}
\item in two or three different clauses, and in which each clause contains either two or three literals.
\item in which each clause contains three literals.
\end{itemize}
\end{theorem}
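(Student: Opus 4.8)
This is a classical fact (it is the statement we import from~\cite{Sc78}); the plan is to recover it from Schaefer's dichotomy theorem together with two standard gadget constructions.

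For the second item, observe that an instance of \naesat{} in which every clause consists of three positive literals is exactly an instance of the Boolean constraint satisfaction problem whose single relation is the ternary $R=\{0,1\}^{3}\setminus\{000,111\}$. Since $R$ is neither $0$-valid, $1$-valid, bijunctive, Horn, dual-Horn, nor affine, Schaefer's theorem immediately yields \NP-completeness (the degenerate clauses $\mathrm{NAE}(x,x,y)$ and $\mathrm{NAE}(x,x,x)$ that may a priori appear are a disequality and an unsatisfiable constraint, and are removed by trivial preprocessing). Equivalently, one can reduce from $3$-SAT directly: add a fresh global variable to every clause to obtain an equivalent NAE-$4$-SAT instance, split each $4$-clause into two $3$-clauses with a fresh linking variable, and finally \emph{monotonise} by replacing each variable $v$ with two positive variables $v^{+},v^{-}$ (the literal $v$ becoming $v^{+}$ and $\neg v$ becoming $v^{-}$) and enforcing $v^{+}\neq v^{-}$ via the positive gadget on three fresh variables $z_{1},z_{2},z_{3}$ consisting of the clauses $\mathrm{NAE}(v^{+},v^{-},z_{i})$ for $i\in\{1,2,3\}$ together with $\mathrm{NAE}(z_{1},z_{2},z_{3})$: if $v^{+}=v^{-}$ then all three $z_{i}$ are forced to the opposite value and the last clause is violated, whereas if $v^{+}\neq v^{-}$ the first three clauses are vacuous and the last one is easy to satisfy.

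For the first item we must in addition bound the number of occurrences of each variable, and we are now allowed clauses of size two. The key observation is that a positive $2$-clause $\mathrm{NAE}(a,b)$ is precisely the constraint $a\neq b$, so a path whose consecutive pairs are such clauses forces its two endpoints to be equal whenever the path has even length. Accordingly, in the monotone NAE-$3$-SAT instance obtained above, replace the $i$-th occurrence of a variable $x$ (say $x$ occurs $k\ge 4$ times) by a private copy $x_{i}$ and close the copies into an even cycle $x_{1},y_{1},x_{2},y_{2},\dots ,x_{k},y_{k},x_{1}$ of length $2k$ using disequality $2$-clauses, with the $y_{i}$ fresh. Any two copies $x_{i}$ and $x_{j}$ lie at even distance on this cycle, so all copies are forced equal and the transformation preserves satisfiability, while each $x_{i}$ now occurs in exactly one $3$-clause and two $2$-clauses and each $y_{i}$ in exactly two $2$-clauses; hence every variable occurs in two or three clauses. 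A constant amount of padding of the same kind takes care of the original variables that occur fewer than four times.

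I expect the real work to sit in exactly two places: the monotone disequality gadget of the second paragraph (without which the monotonisation step does not go through), and the parity and occurrence bookkeeping of the third paragraph, namely that the linking cycle has to be of even length so that copies become \emph{equal} rather than complementary and that the auxiliary variables have to be wired so that every variable ends up with occurrence count exactly two or three. Once these are settled, the rest is a direct consequence of Schaefer's theorem and the textbook degree-reduction technique.
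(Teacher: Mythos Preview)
The paper does not prove this theorem at all: it is imported verbatim from~\cite{Sc78} and used as a black box in the reductions of Theorems~\ref{t-pmc-biprad4} and~\ref{t-dc-dc-biprad3}. So there is nothing to compare against on the paper's side.

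Your sketch is a sound reconstruction of the folklore arguments. The Schaefer-dichotomy route for the second item is clean (your polymorphism checks are correct: the ternary not-all-equal relation fails all six tractable cases), and the alternative direct reduction via a fresh global variable, clause splitting, and your four-clause disequality gadget is also fine. For the first item, the even-cycle degree reduction with size-$2$ disequality clauses is the standard trick and your occurrence counts are right. The only place I would tighten is the ``constant amount of padding'' sentence: spell out explicitly that a variable with a single occurrence gets a length-$4$ cycle $x_{1},y_{1},x_{2},y_{2}$ (so the unused copy $x_{2}$ has two occurrences and everything else has two or three), and that variables with two or three occurrences are left untouched. None of this is a gap, just bookkeeping you have clearly anticipated.
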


\noindent
For the following proof we modify a construction from~\cite{LT22}.
The graph in the original construction is bipartite and we modify it to have radius at most~$4$. Note that this implies the diameter to be bounded by~$8$. A better bound for the diameter cannot be achieved with our construction.

\begin{theorem}\label{t-pmc-biprad4}
\pmc{} is \NP-complete for bipartite graphs of radius~$4$.
\end{theorem}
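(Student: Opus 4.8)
The plan is to give a polynomial-time reduction from the restricted version of \naesat{} in which every literal occurs positively in exactly three clauses of size three (the second case of Theorem~\ref{t-np-none-naesat}), building a bipartite graph $G$ that has a perfect red-blue colouring if and only if the formula has a not-all-equal assignment, and then to modify the construction so that some vertex becomes a center at distance at most~$4$ from everything. I would start from the bipartite gadget of~\cite{LT22}: for each variable $x_i$ introduce a small bipartite ``variable gadget'' whose perfect red-blue colourings encode the two truth values of $x_i$ (the colour of a designated literal vertex representing whether $x_i$ is true), and for each clause $C_j = \{x_a, x_b, x_c\}$ introduce a bipartite ``clause gadget'' that admits a perfect red-blue colouring exactly when the three incoming literal vertices are not all the same colour. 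The perfect-matching-cut requirement forces, via Observation~\ref{o-cut-colouring}, that every vertex has exactly one neighbour of the opposite colour; this rigidity is what lets the clause gadget enforce the not-all-equal condition and lets the variable gadget propagate a consistent truth value to all three occurrences of $x_i$. Connecting gadgets by bicliques $K_{r,r}$ with $r$ large (using Observation~\ref{o-c-all-kr-mono}, which forces such bicliques monochromatic in any red-blue $1$-colouring) is the standard trick to keep the graph bipartite while preventing unwanted edges in the cut.

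Next I would handle the radius. The natural move is to add two new vertices $p, q$ with $pq$ an edge, and attach $p$ to one partition class and $q$ to the other via a ``hub'' construction — e.g.\ a biclique $K_{r,r}$ incident to $p$ on one side and another incident to $q$, wired so that $p$ reaches one side of every gadget within two more steps and $q$ reaches the other side, giving radius~$4$ with center, say, $p$. One must check two things: that this hub has a unique (up to swapping) behaviour in any perfect red-blue colouring — this is where Observation~\ref{o-c-all-kr-mono} again does the work, pinning the hub bicliques monochromatic and forcing $p,q$ to receive the two colours with the single bichromatic edge $pq$ — and that adding the hub does not create a spurious perfect matching cut. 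The phrase in the excerpt ``a better bound for the diameter cannot be achieved with our construction'' signals that the hub genuinely adds two levels on each side, so diameter $8$ is the price; I would not try to optimize this.

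For the correctness proof I would argue both directions. Given a not-all-equal assignment, colour each variable gadget according to the truth value, colour each clause gadget by its (nonempty, by the NAE property) choice of witness, colour the hub as forced, verify that every vertex has exactly one opposite-coloured neighbour, and conclude by Observation~\ref{o-cut-colouring} that $G$ has a perfect matching cut. Conversely, given a perfect red-blue colouring of $G$: first use Observation~\ref{o-c-all-kr-mono} to fix the colours of all connecting and hub bicliques, then read off from each variable gadget a well-defined truth value (the rigidity forcing consistency across the three occurrences), then observe that if some clause had all three literal vertices equal-coloured the corresponding clause gadget could not be perfectly coloured — a contradiction — so the induced assignment is not-all-equal. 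Membership in \NP{} is immediate since a matching cut is a polynomial certificate. The main obstacle I expect is the simultaneous bookkeeping: verifying that the variable and clause gadgets of~\cite{LT22} still behave rigidly after being wired to the new hub, and choosing the biclique sizes $r$ large enough that Observation~\ref{o-c-all-kr-mono} applies everywhere while keeping all pairwise distances at most~$4$ from the center; the gadget design itself is essentially inherited, so the work is in the distance analysis and in ruling out ``cheating'' perfect matching cuts that use the new edges.
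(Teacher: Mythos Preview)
Your high-level strategy matches the paper's: reduce from positive \naesat{}, inherit the bipartite clause/variable gadgets from~\cite{LT22} (the cube as clause gadget, a variable vertex with a pendant dummy), and then graft on extra structure to force a center of eccentricity~$4$. Where you diverge is in the hub mechanism, and that is where the proposal has a real gap.

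You plan to attach the center $p$ via large bicliques $K_{r,r}$ and invoke Observation~\ref{o-c-all-kr-mono} to pin them monochromatic. That trick is fine for \mc{} and \dc{}, but for a \emph{perfect} red-blue colouring every vertex must have \emph{exactly one} neighbour of the other colour. A monochromatic $K_{r,r}$ therefore leaves $2r$ vertices each still needing an external bichromatic partner, and you cannot simply make $p$ adjacent to one side of such a biclique: if $p$ differs in colour from that side it acquires $r>1$ opposite-coloured neighbours, and if it agrees it contributes nothing toward those $r$ vertices' required partners. So the biclique hub does not obviously survive the ``exactly one'' constraint, and you have not said how you would discharge it; this is precisely the ``bookkeeping'' you flag at the end, but it is the heart of the matter, not a detail.

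The paper avoids this entirely. Instead of bicliques it introduces, for every occurrence of a variable in a clause, a small \emph{connector gadget}: a $6$-cycle through the clause vertex $k_j^{x_i}$, the variable vertex $x_i$, its dummy $x_i'$, and three fresh vertices, together with one more fresh vertex of degree~$3$ adjacent to two cycle vertices and to the global center $u$ (which has its own pendant $u'$). A short case analysis shows this gadget is rigidly two-colourable once the colours of $k_j^{x_i}$ and $x_i$ are fixed, and it supplies each of its internal vertices with exactly one opposite-coloured neighbour regardless of the branch taken (the auxiliary path $a_j b_j^1 b_j^2$ plays the same role for the clause cube). The radius bound then follows because every clause vertex and every $a_j$ is at distance~$3$ from $u$ through its connector. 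So the missing idea in your sketch is this bespoke, constant-size connector that is compatible with the perfect-matching constraint; large bicliques are the wrong tool here.
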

\begin{proof}
We reduce from \naesat. Let $(X,\mathcal{C})$ be an instance of this problem, where $X = \{x_1,x_2,\dots,x_n\}$ and ${\cal C} = \{C_1, C_2, \dots , C_m\}$. By Theorem~\ref{t-np-none-naesat}, we know that \naesat{} is \NP-complete even for instances where each literal occurs only positively and every clause contains three literals.

\begin{figure}
\centering
\scalebox{0.9}{
\begin{tikzpicture}
\begin{scope}[scale = 0.9]
\begin{scope}[scale = 0.8]
\def\k{1}

\node[vertex](v1) at (0,0){};
\node[vertex, label=left: $k_j^2$](v2) at (0,2){};
\node[vertex, label=below: $k_j^1$](v3) at (2,0){};
\node[vertex](v4) at (2,2){};
\node[vertex, label = left:$a_j$](v5) at (\k,\k){};
\node[vertex](v6) at (\k, 2+\k){};
\node[vertex](v7) at (2+\k, \k){};
\node[vertex, label=above: $k_j^3$](v8) at (2+\k, 2+\k){};

\draw[edge](v1) -- (v2);
\draw[edge](v1) -- (v3);
\draw[edge](v1) -- (v5);
\draw[edge](v2) -- (v4);
\draw[edge](v2) -- (v6);
\draw[edge](v3) -- (v4);
\draw[edge](v3) -- (v7);
\draw[edge](v4) -- (v8);
\draw[edge](v5) -- (v6);
\draw[edge](v6) -- (v8);
\draw[edge](v7) -- (v5);
\draw[edge](v8) -- (v7);
\end{scope}

\begin{scope}[scale = 0.8, shift = {(5.5,0)}]
\def\k{1}

\node[bvertex](v1) at (0,0){};
\node[bvertex, label=left: $k_j^2$](v2) at (0,2){};
\node[bvertex, label=below: $k_j^1$](v3) at (2,0){};
\node[bvertex](v4) at (2,2){};
\node[rvertex, label = left:$a_j$](v5) at (\k,\k){};
\node[rvertex](v6) at (\k, 2+\k){};
\node[rvertex](v7) at (2+\k, \k){};
\node[rvertex, label=above: $k_j^3$](v8) at (2+\k, 2+\k){};

\draw[edge](v1) -- (v2);
\draw[edge](v1) -- (v3);
\draw[tedge](v1) -- (v5);
\draw[edge](v2) -- (v4);
\draw[tedge](v2) -- (v6);
\draw[edge](v3) -- (v4);
\draw[tedge](v3) -- (v7);
\draw[tedge](v4) -- (v8);
\draw[edge](v5) -- (v6);
\draw[edge](v6) -- (v8);
\draw[edge](v7) -- (v5);
\draw[edge](v8) -- (v7);
\end{scope}

\begin{scope}[scale = 0.8, shift = {(9.75,0)}]
\def\k{1}

\node[bvertex](v1) at (0,0){};
\node[bvertex, label=left: $k_j^2$](v2) at (0,2){};
\node[rvertex, label=below: $k_j^1$](v3) at (2,0){};
\node[rvertex](v4) at (2,2){};
\node[bvertex, label = left:$a_j$](v5) at (\k,\k){};
\node[bvertex](v6) at (\k, 2+\k){};
\node[rvertex](v7) at (2+\k, \k){};
\node[rvertex, label=above: $k_j^3$](v8) at (2+\k, 2+\k){};

\draw[edge](v1) -- (v2);
\draw[tedge](v1) -- (v3);
\draw[edge](v1) -- (v5);
\draw[tedge](v2) -- (v4);
\draw[edge](v2) -- (v6);
\draw[edge](v3) -- (v4);
\draw[edge](v3) -- (v7);
\draw[edge](v4) -- (v8);
\draw[edge](v5) -- (v6);
\draw[tedge](v6) -- (v8);
\draw[tedge](v7) -- (v5);
\draw[edge](v8) -- (v7);
\end{scope}

\begin{scope}[scale = 0.8, shift = {(14,0)}]
\def\k{1}

\node[bvertex](v1) at (0,0){};
\node[rvertex, label=left: $k_j^2$](v2) at (0,2){};
\node[bvertex, label=below: $k_j^1$](v3) at (2,0){};
\node[rvertex](v4) at (2,2){};
\node[bvertex, label = left:$a_j$](v5) at (\k,\k){};
\node[rvertex](v6) at (\k, 2+\k){};
\node[bvertex](v7) at (2+\k, \k){};
\node[rvertex, label=above: $k_j^3$](v8) at (2+\k, 2+\k){};

\draw[tedge](v1) -- (v2);
\draw[edge](v1) -- (v3);
\draw[edge](v1) -- (v5);
\draw[edge](v2) -- (v4);
\draw[edge](v2) -- (v6);
\draw[tedge](v3) -- (v4);
\draw[edge](v3) -- (v7);
\draw[edge](v4) -- (v8);
\draw[tedge](v5) -- (v6);
\draw[edge](v6) -- (v8);
\draw[edge](v7) -- (v5);
\draw[tedge](v8) -- (v7);
\end{scope}
\end{scope}
\end{tikzpicture}}
\caption{\label{f-pmc-biprad4-clause}The clause gadget in the proof of Theorem~\ref{t-pmc-biprad4} (left) and the three ways how it can be coloured with the resulting perfect matching cut.}
\end{figure}
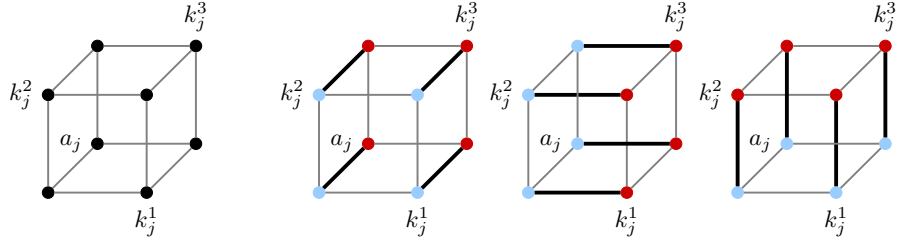

We construct a graph $G$ as follows.
For every clause $C_j$ we take a cube $K_j$ as \emph{clause gadget} with three \emph{clause vertices} $k_j^1, k_j^2$ and $k_j^3$, and one \emph{auxiliary vertex} $a_j$, see Figure~\ref{f-pmc-biprad4-clause} (left).
For every variable $x_i$ we add a \emph{variable vertex} $x_i$ together with a \emph{dummy vertex} $x_i'$, adjacent to $x_i$.
We connect $x_i$ to a clause vertex in $K_j$ if and only if $C_j$ contains $x_i$ as a variable.
We refer to the clause vertex in $K_j$ to which $x_i$ is adjacent as $k_j^{x_i}$.
Further, for every auxiliary vertex $a_j$, we add two more auxiliary vertices $b_j^1$ and $b_j^2$, together with the edges $a_jb_j^1$ and $b_j^1b_j^2$.

To bound the radius, we add a \emph{center} vertex $u$ connected to another dummy vertex $u'$.
For every occurrence of a variable $x_i$ in a clause $C_j$, we add a \emph{connector gadget} as depicted in Figure~\ref{f-pmc-biprad4-connbip} (left). This connects all the clauses and dummy vertices to the center vertex $u$.

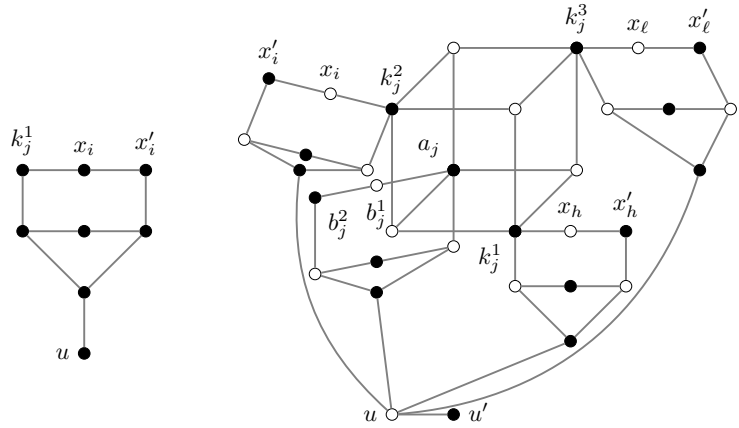
\begin{figure}
\centering
\scalebox{0.9}{
\begin{tikzpicture}
\begin{scope}[scale = 0.9]
\begin{scope}
\node[vertex, label=$k_j^1$](c1) at (0,3){};
\node[vertex, label= $x_i$](c2) at (1,3){};
\node[vertex, label = $x_i'$](c3) at (2,3){};
\node[vertex](c4) at (0,2){};
\node[vertex](c5) at (1,2){};
\node[vertex](c6) at (2,2){};
\node[vertex](c7) at (1,1){};

\node[vertex, label =left: $u$](u) at (1,0){};

\draw[edge](c1) -- (c2);
\draw[edge](c2) -- (c3);
\draw[edge](c3) -- (c6);
\draw[edge](c4) -- (c5);
\draw[edge](c5) -- (c6);
\draw[edge](c4) -- (c1);
\draw[edge](c4) -- (c7);
\draw[edge](c6) -- (c7);
\draw[edge](u) -- (c7);

\end{scope}

\begin{scope}[shift = {(6,2)}]

\def\k{1}

\node[evertex](v1) at (0,0){};
\node[vertex, label=above: $k_j^2$](v2) at (0,2){};
\node[vertex, label=below left: $k_j^1$](v3) at (2,0){};
\node[evertex](v4) at (2,2){};
\node[vertex, label = above left:$a_j$](v5) at (1,1){};
\node[evertex](v6) at (1, 3){};
\node[evertex](v7) at (3, 1){};
\node[vertex, label=above: $k_j^3$](v8) at (3, 3){};

\draw[edge](v1) -- (v2);
\draw[edge](v1) -- (v3);
\draw[edge](v1) -- (v5);
\draw[edge](v2) -- (v4);
\draw[edge](v2) -- (v6);
\draw[edge](v3) -- (v4);
\draw[edge](v3) -- (v7);
\draw[edge](v4) -- (v8);
\draw[edge](v5) -- (v6);
\draw[edge](v6) -- (v8);
\draw[edge](v7) -- (v5);
\draw[edge](v8) -- (v7);

\begin{scope}
\node[vertex](c11) at (3,3){};
\node[evertex, label=above:$x_\ell$](c12) at (4,3){};
\node[vertex, label=above:$x_\ell'$](c13) at (5,3){};
\node[evertex](c14) at (3.5,2){};
\node[vertex](c15) at (4.5,2){};
\node[evertex](c16) at (5.5,2){};
\node[vertex](c17) at (5,1){};

\draw[edge](c11) -- (c12);
\draw[edge](c12) -- (c13);
\draw[edge](c13) -- (c16);
\draw[edge](c14) -- (c15);
\draw[edge](c15) -- (c16);
\draw[edge](c14) -- (c11);
\draw[edge](c14) -- (c17);
\draw[edge](c16) -- (c17);

\end{scope}

\begin{scope}
\node[vertex](c21) at (2,0){};
\node[evertex, label=above:$x_h$](c22) at (2.9,0){};
\node[vertex, label=above:$x_h'$](c23) at (3.8,0){};
\node[evertex](c24) at (2,-0.9){};
\node[vertex](c25) at (2.9,-0.9){};
\node[evertex](c26) at (3.8,-0.9){};
\node[vertex](c27) at (2.9,-1.8){};

\draw[edge](c21) -- (c22);
\draw[edge](c22) -- (c23);
\draw[edge](c23) -- (c26);
\draw[edge](c24) -- (c25);
\draw[edge](c25) -- (c26);
\draw[edge](c24) -- (c21);
\draw[edge](c24) -- (c27);
\draw[edge](c26) -- (c27);

\end{scope}

\begin{scope}
\node[vertex, label=above:$x_i'$](c31) at (-2,2.5){};
\node[evertex, label=above:$x_i$](c32) at (-1,2.25){};
\node[vertex](c33) at (0,2){};
\node[evertex](c34) at (-2.4,1.5){};
\node[vertex](c35) at (-1.4,1.25){};
\node[evertex](c36) at (-0.4,1){};
\node[vertex](c37) at (-1.5,1){};

\draw[edge](c31) -- (c32);
\draw[edge](c32) -- (c33);
\draw[edge](c33) -- (c36);
\draw[edge](c34) -- (c35);
\draw[edge](c35) -- (c36);
\draw[edge](c34) -- (c31);
\draw[edge](c34) -- (c37);
\draw[edge](c36) -- (c37);

\end{scope}

\begin{scope}
\node[vertex, label=below right:$b_j^2$](c41) at (-1.25,0.55){};
\node[evertex, label=below:$b_j^1$](c42) at (-0.25,0.75){};
\node[vertex](c43) at (1,1){};
\node[evertex](c44) at (-1.25,-0.7){};
\node[vertex](c45) at (-0.25,-0.5){};
\node[evertex](c46) at (1,-0.25){};
\node[vertex](c47) at (-0.25,-1){};

\draw[edge](c41) -- (c42);
\draw[edge](c42) -- (c43);
\draw[edge](c43) -- (c46);
\draw[edge](c44) -- (c45);
\draw[edge](c45) -- (c46);
\draw[edge](c44) -- (c41);
\draw[edge](c44) -- (c47);
\draw[edge](c46) -- (c47);

\end{scope}

\node[evertex, label= {left:\strut{$u$}}](u) at (0,-3){};
\node[vertex, label= {right:\strut{$u'$}}](up) at (1,-3){};

\draw[edge](u) to [bend right= 34] (c17);
\draw[edge](u) -- (c27);
\draw[edge](u) to [bend left= 27] (c37);
\draw[edge](u) -- (c47);
\draw[edge](u) -- (up);

\end{scope}
\end{scope}
\end{tikzpicture}}
\caption{\label{f-pmc-biprad4-connbip}The connector gadget in the proof of Theorem~\ref{t-pmc-biprad4} (left) and the bipartition of the clause gadget of $C_j = (x_i,x_h, x_\ell)$ with the corresponding variable vertices and connector gadgets (right).}
\end{figure}

Observe first that $G$ is bipartite. One partition class consists of the clause vertices, the dummy vertices, the auxiliary vertices $a_j$ and $b_j^2$, for $j \in \{1,\dots,m\}$, and the vertices in the connector gadget as depicted in Figure~\ref{f-pmc-biprad4-connbip} (right).

To see that $G$ has radius~$4$, note first that all vertices in the connector gadgets are within distance~$4$ from $u$.
Note further that the clause vertices and the auxiliary vertex $a_j$ are at distance~$3$ from $u$.
Since every other vertex in a clause gadget is at distance $1$ from a clause vertex or $a_j$, for $ j \in \{1,\dots, m\}$, $G$ has radius~$4$.

By Observation~\ref{o-cut-colouring} we know that $G$ has a perfect matching cut if and only if $G$ has a perfect red-blue colouring.
Thus, it remains to show that $G$ has a perfect red-blue colouring if and only if $(X,\mathcal{C})$ is a \yes-instance of \naesat.

\begin{claim}\label{c-pmc-biprad4-cgmono}
In every perfect red-blue colouring of $G$, every clause gadget has one of the three colourings given in Figure~\ref{f-pmc-biprad4-clause} (right).
\end{claim}
\begin{claimproof}
Note first that there are vertices in the clause gadget which have neighbours only inside the clause gadgets.
Thus, in every perfect red-blue colouring of $G$, no clause gadget can be monochromatic. Note further that the given colourings are the only possible bichromatic colourings of the cube.
\end{claimproof}

\begin{claim}\label{c-pmc-biprad4-vcmono}
In every perfect red-blue colouring of $G$, a variable vertex $x_i$, $ i \in \{1,\dots, n\}$, and its adjacent clause vertices $k_j^{x_i}$, $j \in \{1,\dots, m\}$ have the same colour. The same also holds for $a_j$ and $b_j^1$.
\end{claim}
\begin{claimproof}
Suppose there is a red variable vertex $x_i$ with an adjacent blue clause vertex $k_j^{x_i}$.
Then, all other neighbours of $k_j^{x_i}$ are blue.
Thus, the whole clause gadget is coloured blue, a contradiction to Claim~\ref{c-pmc-biprad4-cgmono}.
\end{claimproof}

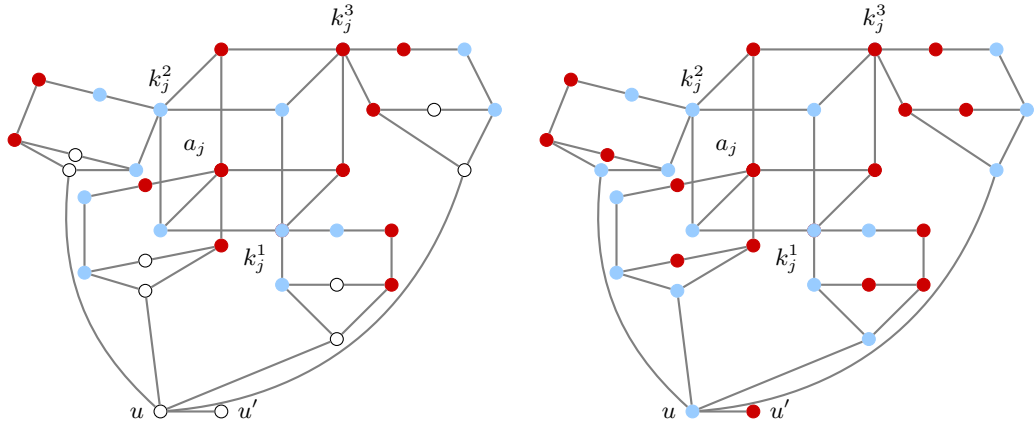
\begin{figure}
\centering
\begin{tikzpicture}

\begin{scope}[shift = {(0,0)}, scale = 0.8]

\def\k{1}

\node[bvertex](v1) at (0,0){};
\node[bvertex, label=above: {\small $k_j^2$}](v2) at (0,2){};
\node[rvertex, label=below left: {\small $k_j^1$}](v3) at (2,0){};
\node[bvertex](v4) at (2,2){};
\node[rvertex, label = above left: {\small $a_j$}](v5) at (1,1){};
\node[rvertex](v6) at (1, 3){};
\node[rvertex](v7) at (3, 1){};
\node[bvertex, label=above: {\small $k_j^3$}](v8) at (3, 3){};

\draw[edge](v1) -- (v2);
\draw[edge](v1) -- (v3);
\draw[edge](v1) -- (v5);
\draw[edge](v2) -- (v4);
\draw[edge](v2) -- (v6);
\draw[edge](v3) -- (v4);
\draw[edge](v3) -- (v7);
\draw[edge](v4) -- (v8);
\draw[edge](v5) -- (v6);
\draw[edge](v6) -- (v8);
\draw[edge](v7) -- (v5);
\draw[edge](v8) -- (v7);

\begin{scope}
\node[rvertex](c11) at (3,3){};
\node[rvertex](c12) at (4,3){};
\node[bvertex](c13) at (5,3){};
\node[rvertex](c14) at (3.5,2){};
\node[evertex](c15) at (4.5,2){};
\node[bvertex](c16) at (5.5,2){};
\node[evertex](c17) at (5,1){};

\draw[edge](c11) -- (c12);
\draw[edge](c12) -- (c13);
\draw[edge](c13) -- (c16);
\draw[edge](c14) -- (c15);
\draw[edge](c15) -- (c16);
\draw[edge](c14) -- (c11);
\draw[edge](c14) -- (c17);
\draw[edge](c16) -- (c17);

\end{scope}

\begin{scope}
\node[bvertex](c21) at (2,0){};
\node[bvertex](c22) at (2.9,0){};
\node[rvertex](c23) at (3.8,0){};
\node[bvertex](c24) at (2,-0.9){};
\node[evertex](c25) at (2.9,-0.9){};
\node[rvertex](c26) at (3.8,-0.9){};
\node[evertex](c27) at (2.9,-1.8){};

\draw[edge](c21) -- (c22);
\draw[edge](c22) -- (c23);
\draw[edge](c23) -- (c26);
\draw[edge](c24) -- (c25);
\draw[edge](c25) -- (c26);
\draw[edge](c24) -- (c21);
\draw[edge](c24) -- (c27);
\draw[edge](c26) -- (c27);

\end{scope}

\begin{scope}
\node[rvertex](c31) at (-2,2.5){};
\node[bvertex](c32) at (-1,2.25){};
\node[bvertex](c33) at (0,2){};
\node[rvertex](c34) at (-2.4,1.5){};
\node[evertex](c35) at (-1.4,1.25){};
\node[bvertex](c36) at (-0.4,1){};
\node[evertex](c37) at (-1.5,1){};

\draw[edge](c31) -- (c32);
\draw[edge](c32) -- (c33);
\draw[edge](c33) -- (c36);
\draw[edge](c34) -- (c35);
\draw[edge](c35) -- (c36);
\draw[edge](c34) -- (c31);
\draw[edge](c34) -- (c37);
\draw[edge](c36) -- (c37);

\end{scope}

\begin{scope}
\node[bvertex](c41) at (-1.25,0.55){};
\node[rvertex](c42) at (-0.25,0.75){};
\node[rvertex](c43) at (1,1){};
\node[bvertex](c44) at (-1.25,-0.7){};
\node[evertex](c45) at (-0.25,-0.5){};
\node[rvertex](c46) at (1,-0.25){};
\node[evertex](c47) at (-0.25,-1){};

\draw[edge](c41) -- (c42);
\draw[edge](c42) -- (c43);
\draw[edge](c43) -- (c46);
\draw[edge](c44) -- (c45);
\draw[edge](c45) -- (c46);
\draw[edge](c44) -- (c41);
\draw[edge](c44) -- (c47);
\draw[edge](c46) -- (c47);

\end{scope}

\node[evertex, label= {left:\strut{\small $u$}}](u) at (0,-3){};
\node[evertex, label= {right:\strut{\small $u'$}}](up) at (1,-3){};

\draw[edge](u) to [bend right= 34] (c17);
\draw[edge](u) -- (c27);
\draw[edge](u) to [bend left= 27] (c37);
\draw[edge](u) -- (c47);
\draw[edge](u) -- (up);

\end{scope}

\begin{scope}[shift = {(7,0)}, scale = 0.8]

\def\k{1}

\node[bvertex](v1) at (0,0){};
\node[bvertex, label=above: {\small $k_j^2$}](v2) at (0,2){};
\node[rvertex, label=below left: {\small $k_j^1$}](v3) at (2,0){};
\node[bvertex](v4) at (2,2){};
\node[rvertex, label = above left: {\small $a_j$}](v5) at (1,1){};
\node[rvertex](v6) at (1, 3){};
\node[rvertex](v7) at (3, 1){};
\node[bvertex, label=above: {\small $k_j^3$}](v8) at (3, 3){};

\draw[edge](v1) -- (v2);
\draw[edge](v1) -- (v3);
\draw[edge](v1) -- (v5);
\draw[edge](v2) -- (v4);
\draw[edge](v2) -- (v6);
\draw[edge](v3) -- (v4);
\draw[edge](v3) -- (v7);
\draw[edge](v4) -- (v8);
\draw[edge](v5) -- (v6);
\draw[edge](v6) -- (v8);
\draw[edge](v7) -- (v5);
\draw[edge](v8) -- (v7);

\begin{scope}
\node[rvertex](c11) at (3,3){};
\node[rvertex](c12) at (4,3){};
\node[bvertex](c13) at (5,3){};
\node[rvertex](c14) at (3.5,2){};
\node[rvertex](c15) at (4.5,2){};
\node[bvertex](c16) at (5.5,2){};
\node[bvertex](c17) at (5,1){};

\draw[edge](c11) -- (c12);
\draw[edge](c12) -- (c13);
\draw[edge](c13) -- (c16);
\draw[edge](c14) -- (c15);
\draw[edge](c15) -- (c16);
\draw[edge](c14) -- (c11);
\draw[edge](c14) -- (c17);
\draw[edge](c16) -- (c17);

\end{scope}

\begin{scope}
\node[bvertex](c21) at (2,0){};
\node[bvertex](c22) at (2.9,0){};
\node[rvertex](c23) at (3.8,0){};
\node[bvertex](c24) at (2,-0.9){};
\node[rvertex](c25) at (2.9,-0.9){};
\node[rvertex](c26) at (3.8,-0.9){};
\node[bvertex](c27) at (2.9,-1.8){};

\draw[edge](c21) -- (c22);
\draw[edge](c22) -- (c23);
\draw[edge](c23) -- (c26);
\draw[edge](c24) -- (c25);
\draw[edge](c25) -- (c26);
\draw[edge](c24) -- (c21);
\draw[edge](c24) -- (c27);
\draw[edge](c26) -- (c27);

\end{scope}

\begin{scope}
\node[rvertex](c31) at (-2,2.5){};
\node[bvertex](c32) at (-1,2.25){};
\node[bvertex](c33) at (0,2){};
\node[rvertex](c34) at (-2.4,1.5){};
\node[rvertex](c35) at (-1.4,1.25){};
\node[bvertex](c36) at (-0.4,1){};
\node[bvertex](c37) at (-1.5,1){};

\draw[edge](c31) -- (c32);
\draw[edge](c32) -- (c33);
\draw[edge](c33) -- (c36);
\draw[edge](c34) -- (c35);
\draw[edge](c35) -- (c36);
\draw[edge](c34) -- (c31);
\draw[edge](c34) -- (c37);
\draw[edge](c36) -- (c37);

\end{scope}

\begin{scope}
\node[bvertex](c41) at (-1.25,0.55){};
\node[rvertex](c42) at (-0.25,0.75){};
\node[rvertex](c43) at (1,1){};
\node[bvertex](c44) at (-1.25,-0.7){};
\node[rvertex](c45) at (-0.25,-0.5){};
\node[rvertex](c46) at (1,-0.25){};
\node[bvertex](c47) at (-0.25,-1){};

\draw[edge](c41) -- (c42);
\draw[edge](c42) -- (c43);
\draw[edge](c43) -- (c46);
\draw[edge](c44) -- (c45);
\draw[edge](c45) -- (c46);
\draw[edge](c44) -- (c41);
\draw[edge](c44) -- (c47);
\draw[edge](c46) -- (c47);

\end{scope}

\node[bvertex, label= {left:\strut{\small $u$}}](u) at (0,-3){};
\node[rvertex, label= {right:\strut{\small $u'$}}](up) at (1,-3){};

\draw[edge](u) to [bend right= 34] (c17);
\draw[edge](u) -- (c27);
\draw[edge](u) to [bend left= 27] (c37);
\draw[edge](u) -- (c47);
\draw[edge](u) -- (up);

\end{scope}
\end{tikzpicture}
\caption{\label{f-pmc-biprad4-col}The partial colouring resulting from the truth assignment (left) and one of the two ways to complete it (right).}
\end{figure}

\noindent
Assume first that $(X,\mathcal{C})$ is a \yes-instance of \naesat.
For every variable which is set to true, we colour the corresponding variable vertex blue and for every variable set to false, we colour the corresponding variable vertex red.
Note that since $(X,\mathcal{C})$ is a \yes-instance of \naesat, for each clause at least one variable vertex is blue and at least one is red.
Note further that there is a unique way to extend this colouring to a valid colouring of the clause gadgets. 
We extend this colouring to the variable vertices using Claim~\ref{c-pmc-biprad4-vcmono}.
We propagate the colouring further in the only possible way until everything expect $u$, $u'$ and two vertices per connector gadget is coloured, see Figure~\ref{f-pmc-biprad4-col} (left).
We colour the center vertex $u$ blue and extend the colouring in the only possible way, see Figure~\ref{f-pmc-biprad4-col} (right).
This leads to a perfect red-blue colouring of $G$.

For the other direction assume that $G$ admits a perfect red-blue colouring.
We set a variable to true if its corresponding variable vertex is coloured blue and red otherwise.
By Claim~\ref{c-pmc-biprad4-cgmono} we know that for every perfect red-blue colouring in every clause gadget at least one clause vertex is coloured red and one blue. Further, by Claim~\ref{c-pmc-biprad4-vcmono}, we know that a variable vertex has the same colour as its adjacent clause vertices.
Thus, the assignment is consistent and in every clause there is at least one variable set to true and one to false.
Therefore, $(X,\mathcal{C})$ is a \yes-instance of \naesat.
\end{proof}

\noindent
By Observation~\ref{o-c-all-kr-mono}, complete bipartite graphs have to be monochromatic if each partition class is large enough. 
For the following results we modify the constructions in the case of non-bipartite graphs with bounded radius and diameter. The general idea is the following. 
We transform all cliques into bicliques such that a vertex $v$ in the original graph $G$ corresponds to two new vertices $v_1, v_2$ in the new graph $G'$. 
Let $V_1, V_2$ be the partition classes of $G'$. In $G'$, $v_1\in V_1$ and $v_2\in V_2$. The neighbourhood of $v_1$ is the set of vertices in $V_2$ who correspond to the neighbours of $v$ in $G$. Similarly, the neighbours of $v_2$ are those corresponding to neighbours of $v$ in $V_1$.
Therefore, the edges between variable gadgets and clause gadgets exist twice, leading to two copies of $G$ where cliques are replaced by independent sets.
The colouring properties are preserved due to the connection of the two copies. This construction assures that the radius and diameter of the graph only increase by~$1$.

In the following we apply this construction to a construction from~\cite{LMPS24}.
Note that the fact that $d \geq 2$ is important in the proof.

\begin{theorem}\label{t-dc-dc-biprad3}
For $d\geq 2$, \dc{} is \NP-complete for bipartite graphs of radius~$3$ and diameter~$4$.
\end{theorem}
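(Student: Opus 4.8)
The plan is to take a known NP-completeness construction for \dc{} on (non-bipartite) graphs of bounded radius and diameter—specifically the reduction from \naesat{} in~\cite{LMPS24}—and apply the biclique doubling transformation sketched in the paragraph preceding the statement. First I would recall the gadget structure from~\cite{LMPS24}: a reduction from \naesat{} (each clause with three literals, each literal positive) where variable gadgets and clause gadgets are built from large cliques, so that Observation~\ref{o-c-all-kr-mono} forces each clique to be monochromatic in every red-blue $d$-colouring, and the bichromatic edges between gadgets encode a not-all-equal assignment. Then I would describe the new graph $G'$: replace every vertex $v$ by a pair $v_1 \in V_1$, $v_2 \in V_2$, and for every edge $uv$ of $G$ add the edges $u_1v_2$ and $u_2v_1$; this turns every clique $K_r$ of $G$ into a biclique $K_{r,r}$ and keeps $G'$ bipartite with partition classes $V_1, V_2$. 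I would add a dominating apparatus—a center vertex (doubled, so $u_1,u_2$), adjacent to one vertex per gadget in each copy exactly as in the original construction—to control the radius, arguing that distances in $G'$ are at most those in $G$ plus~$1$, so radius~$2$ becomes radius~$3$ and diameter~$3$ becomes diameter~$4$.

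The core of the proof is the equivalence between red-blue $d$-colourings of $G'$ and of $G$. The key lemma to prove is that the two copies are ``synchronised'': in any red-blue $d$-colouring of $G'$, for every original vertex $v$ the vertices $v_1$ and $v_2$ receive the same colour. This should follow because the bicliques replacing the cliques are large enough (choose the clique sizes in the \naesat{} gadgets at least $2d$, with one side $\geq 2d+1$) that Observation~\ref{o-c-all-kr-mono} forces each biclique $K_{r,r}$ to be monochromatic; since $v_1$ and $v_2$ lie together in some biclique (the doubled clique containing $v$), they get the same colour. Once synchronisation is established, a red-blue $d$-colouring of $G'$ projects to a red-blue $d$-colouring of $G$ (each original vertex inherits the common colour of its two copies, and an original vertex with $d+1$ neighbours of the other colour in $G$ would give $v_1$ or $v_2$ that many such neighbours in $G'$), and conversely any red-blue $d$-colouring of $G$ lifts by giving $v_1$ and $v_2$ the colour of $v$—here I must check the degree bound is still satisfied, which it is since $v_1$'s neighbours in $G'$ are exactly the copies of $v$'s neighbours in $G$. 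Combining this with the correctness of the original \naesat{} reduction yields: $G'$ has a $d$-cut iff $(X,\mathcal C)$ is a \yes-instance.

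The main obstacle I expect is the interaction between the biclique-doubling and the \emph{center/dominating vertex} used to bound the radius. The center vertex has high degree, and in a red-blue $d$-colouring its at-most-$d$ neighbours of the other colour impose a genuine constraint, so I need to verify that the original construction's center vertex was already attached in a way that survives doubling—i.e.\ that colouring the doubled center vertices monochromatically (or with the prescribed single bichromatic edge to a doubled dummy vertex, as in Theorem~\ref{t-pmc-biprad4}) is always possible and does not create a vertex exceeding the $d$-bound. A related subtlety is that rule R2 / colour-processing behaviour used implicitly in~\cite{LMPS24} must remain valid: the doubling could, in principle, let a vertex accumulate coloured neighbours from both copies, but since each $v_i$ only ever sees one copy of each original neighbour this does not happen. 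The remaining work—checking bipartiteness of $G'$, recomputing radius and diameter precisely, and verifying the clique sizes chosen in the \naesat{} gadget meet the $\min\{r,s\}\ge 2d$, $\max\{r,s\}\ge 2d+1$ threshold of Observation~\ref{o-c-all-kr-mono} so that $d\ge 2$ (and indeed $d\ge 1$, though $d\ge 2$ is where the construction genuinely needs the threshold)—is routine and I would present it compactly, mirroring the style of the proof of Theorem~\ref{t-pmc-biprad4}.
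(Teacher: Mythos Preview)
Your proposal is correct and follows essentially the same route as the paper: the paper, too, applies the biclique-doubling idea to the \naesat{} construction from~\cite{LMPS24}, obtaining two bicliques $K,K'$ (doubled clause cliques) and variable bicliques $X_i$ with partition classes of size~$3d$, connected by a pair of bridge vertices $u_a,u_b,u_a',u_b'$. The only difference is presentational: rather than proving a general synchronisation lemma and then projecting back to the original graph, the paper writes out the doubled graph explicitly and argues correctness directly on it (each of $K$, $K'$, and the $X_i$ is forced monochromatic by Observation~\ref{o-c-all-kr-mono}, $K$ and $K'$ must get opposite colours, and the clause-edge counts then encode the not-all-equal constraint).
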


\begin{proof}
We reduce from \naesat. Let $(X,{\cal C})$ be an instance of this problem, where $X= \{x_1,x_2,\dots,x_n\}$ and ${\cal C} = \{C_1, C_2, \dots , C_m\}$. By Theorem~\ref{t-np-none-naesat}, we may assume that each literal occurs only positively and in at most three different clauses, and each $C_i$ contains either two or three literals.

From $(X,{\cal C})$ we construct a graph $G$ as follows, see Figure~\ref{f-dc-biprad3} for an example.
We introduce a complete bipartite graph $K $ with partition classes $K_a = \{C_{1_1}^a,  \dots, C_{1_d}^a, \dots, C_{m_1}^a, \dots,  C_{m_d}^a\}$ and $K_b = \{C_{1_1}^b, \dots, C_{1_d}^b, \dots, C_{m_1}^b, \dots,  C_{m_d}^b\}$ and a complete bipartite graph $K'$ with partition classes $K'_a = \{C_{1_1}'^a, \dots, C_{1_d}'^a, \dots, C_{m_1}'^a, \dots,  C_{m_d}'^a\}$ and $K'_b = \{C_{1_1}'^b, \dots, C_{1_d}'^b, \dots, C_{m_1}'^b, \dots,  C_{m_d}'^b\}$.
For every variable $x_i$ we add a complete bipartite graph $X_i$ with partition classes $X_i^a$ and $X_i^b$ of size~$3d$ each.
For each clause $C_j = \{x_i, x_k,  x_\ell\}$ consisting of three literals we choose 
\begin{itemize}
\item $d-1$ vertices of $X_i^a$ and make them complete to $C_{j_1}^b, \dots, C_{j_d}^b$ and $C_{j_1}'^b, \dots, C_{j_d}'^b$;
\item $d-1$ vertices of $X_i^b$ and make them complete to $C_{j_1}^a, \dots, C_{j_d}^a$ and $C_{j_1}'^a, \dots, C_{j_d}'^a$;
\item one vertex of $X_k^a$ and add edges to $C_{j_1}^b, \dots, C_{j_d}^b$ and $C_{j_1}'^b, \dots, C_{j_d}'^b$;
\item one vertex of $X_k^b$ and add edges to $C_{j_1}^a, \dots, C_{j_d}^a$ and $C_{j_1}'^a, \dots, C_{j_d}'^a$;
\item one vertex of $X_\ell^a$ and add edges to  $C_{j_1}^b, \dots, C_{j_d}^b$ and $C_{j_1}'^b, \dots, C_{j_d}'^b$; and
\item one vertex of $X_\ell^b$ and add edges to  $C_{j_1}^a, \dots, C_{j_d}^a$ and $C_{j_1}'^a, \dots, C_{j_d}'^a$.
\end{itemize}
For each clause $C_j = \{x_i, x_k\}$ consisting of two literals we choose
\begin{itemize}
\item $d$ vertices of $X_i^a$ and make them complete to $C_{j_1}^b, \dots, C_{j_d}^b$ and $C_{j_1}'^b, \dots, C_{j_d}'^b$;
\item $d$ vertices of $X_i^b$ and make them complete to $C_{j_1}^a, \dots, C_{j_d}^a$ and $C_{j_1}'^a, \dots, C_{j_d}'^a$;
\item one vertex of $X_k^a$ and add edges to $C_{j_1}^b, \dots, C_{j_d}^b$ and $C_{j_1}'^b, \dots, C_{j_d}'^b$; and
\item one vertex of $X_k^b$ and add edges to $C_{j_1}^a, \dots, C_{j_d}^a$ and $C_{j_1}'^a, \dots, C_{j_d}'^a$.
\end{itemize}

\begin{figure}
\centering
\scalebox{0.9}{
\begin{tikzpicture}

	\tikzstyle{twovbox}=[draw,lightblue, thick, rounded corners, minimum width = 1cm, minimum height = 0.4cm]
	\tikzstyle{threevbox}=[draw,lightblue,thick, rounded corners, minimum width = 1.4cm, minimum height = 0.4cm]
	
	\begin{scope}[scale = 0.9]
	\begin{scope}[shift = {(0,-0.5)}]
	
	\draw (0,5) rectangle (4,6);	
	\draw (0,6) rectangle (4,7);
	\node[](ka) at (-0.5, 5.5){$K_a$};	
	\node[](kb) at (-0.5, 6.5){$K_b$};	
	
	\draw (7,5) rectangle (11,6);	
	\draw (7,6) rectangle (11,7);
	\node[](kap) at (11.5, 5.5){$K_a'$};	
	\node[](kbp) at (11.5, 6.5){$K_b'$};

	\node[vertex, label={[left, label distance = 0.1]:$u_b$}] (ua) at (3.5,6.5){};
	\node[vertex, label={[left, label distance = 0.1]:$u_a$}] (ub) at (3.5,5.5){};
	\node[vertex, label={[right, label distance = 0.1]: \strut $u_b'$}] (uap) at (7.5,6.5){};
	\node[vertex, label={[right, label distance = 0.1]:\strut $u_a'$}] (ubp) at (7.5,5.5){};
	
	\draw[edge] (ua) -- (ubp);
	\draw[edge] (uap) -- (ub);
	
	\node[vertex](c1a) at (0.5,5.5){};
	\node[vertex](c2a) at (1,  5.5){};
	\node[vertex](c3a) at (1.5,5.5){};
	\node[threevbox](ca) at (1,5.5){};
	
	\node[vertex](c1b) at (1.5,  6.5){};
	\node[vertex](c2b) at (2,6.5){};
	\node[vertex](c3b) at (2.5,  6.5){};
	\node[threevbox](cb) at (2,6.5){};
	
	\node[vertex](c1ap) at   (9.5,  5.5){};
	\node[vertex](c2ap) at   (10,5.5){};
	\node[vertex](c3ap) at   (10.5,  5.5){};
	\node[threevbox](cap) at (10,5.5){};
	
	\node[vertex](c1bp) at   (8.5, 6.5){};
	\node[vertex](c2bp) at   (9,  6.5){};
	\node[vertex](c3bp) at   (9.5,6.5){};	
	\node[threevbox](cbp) at (9,  6.5){};
	
	\end{scope}

		\draw (0,0) rectangle (3,1);	
	\draw (0,1) rectangle (3,2);	
	\node[](xia) at (-0.5, 1.5){$X_i^a$};	
	\node[](xib) at (-0.5, 0.5){$X_i^b$};

\begin{scope}[shift = {(0,0)}]	
	\draw (4.5,0) rectangle (6.5,1);	
	\draw (4.5,1) rectangle (6.5,2);	
	\node[](xka) at (4, 1.5){$X_k^a$};	
	\node[](xkb) at (4, 0.5){$X_k^b$};
		\node[vertex](xkb) at (5.5,0.5){};
	\node[vertex](xka) at (5.5,1.5){};	
	\end{scope}

	\draw (8,0) rectangle (11,1);	
	\draw (8,1) rectangle (11,2);	
	\node[](xla) at (11.5, 1.5){$X_\ell^a$};	
	\node[](xlb) at (11.5, 0.5){$X_\ell^b$};

	\node[vertex](xi1b) at (0.75,0.5){};
	\node[vertex](xi2b) at (1.25,  0.5){};
	\node[twovbox](xib) at (1,0.5){};
	
	\node[vertex](xi1a) at (2,  1.5){};
	\node[vertex](xi2a) at (2.5,1.5){};
	\node[twovbox](xia) at (2.25,1.5){};

	\node[vertex](xlb) at (10, 0.5){};
	\node[vertex](xla) at (8.75,1.5){};	
	
	\draw[edge](ca.south) -- (xib);
	%\draw[edge](ca) -- (xi2b);
	\draw[edge](ca.south) -- (xkb);
	\draw[edge](ca.south) to [bend right=17] (xlb);
	\draw[edge](cb.south) -- (xia);
	%\draw[edge](cb) -- (xi2a);
	\draw[edge](cb.south) -- (xka);
	\draw[edge](cb.south) to [bend right=18] (xla);
	
	\draw[edge](cap.south) to [bend left=15] (xib.east);
	%\draw[edge](cap) -- (xi2b);
	\draw[edge](cap.south) -- (xkb);
	\draw[edge](cap.south) -- (xlb);
	\draw[edge](cbp.south) to [bend left=18] (xia.east);
	%\draw[edge](cbp) -- (xi2a);
	\draw[edge](cbp.south) -- (xka);
	\draw[edge](cbp.south) -- (xla);
	
%	\draw[edge](c2a) -- (xi1b);
%	\draw[edge](c2a) -- (xi2b);
%	\draw[edge](c2a) -- (xkb);
%	\draw[edge](c2a) -- (xlb);
%	\draw[edge](c2b) -- (xi1a);
%	\draw[edge](c2b) -- (xi2a);
%	\draw[edge](c2b) to [bend right=20] (xka);
%	\draw[edge](c2b) to [bend right=20] (xla);	
	
%	\draw[edge](c3a) -- (xi1b);
%	\draw[edge](c3a) -- (xi2b);
%	\draw[edge](c3a) -- (xkb);
%	\draw[edge](c3a) -- (xlb);
%	\draw[edge](c3b) -- (xi1a);
%	\draw[edge](c3b) -- (xi2a);
%	\draw[edge](c3b) to [bend right=20] (xka);
%	\draw[edge](c3b) to [bend right=20] (xla);

\end{scope}
	
\end{tikzpicture}}
\caption{\label{f-dc-biprad3} Representation of a clause $\{x_i, x_k, x_\ell\}$, in the case of $d=3$. For readability, the edges between the partition classes are not shown and edges incident to a blue box stand for edges to all vertices in the blue box.}  
\end{figure}
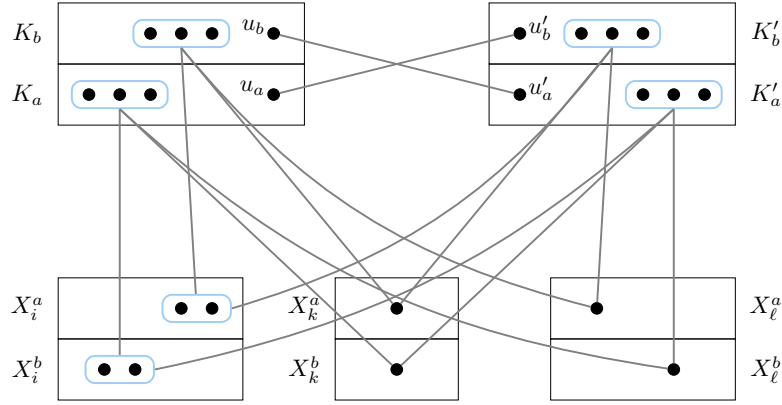
% $d$ vertices of $X_i^a$ (respectively $d$ vertices of $X_i^b$) and add edges to $C_{j_1}^b, \dots, C_{j_d}^b$ and $C_{j_1}'^b, \dots, C_{j_d}'^b$, (respectively $C_{j_1}^a, \dots, C_{j_d}^a$ and $C_{j_1}'^a, \dots, C_{j_d}'^a$).
%Further, we choose a vertex of $X_k$ and add edges to $C_{j_1}^b, \dots, C_{j_d}^b$ and $C_{j_1}'^b, \dots, C_{j_d}'^b$, (respectively $C_{j_1}^a, \dots, C_{j_d}^a$ and $C_{j_1}'^a, \dots, C_{j_d}'^a$).
\noindent
In all cases, ensure that every vertex in every $X_i$ is associated to at most one clause. 
We call those vertices of $X_i$ which are not selected to represent an occurrence of $x_i$ \emph{auxiliary vertices}.
 For each of these auxiliary vertices $x$, we add one vertex to $K_b$ and one to $K'_b$ if $x \in X_i^a$ and make them adjacent to $x$. If $x \in X_i^b$ we add one vertex to $K_a$ and one to $K'_a$ and make them adjacent to $x$. To complete the construction, we add vertices $u_a$ to $K_a$, $u_b$ to $K_b$, $u'_a$ to $K'_a$ and  $u'_b$ to $K'_b$. Finally, we add the edges $u_au'_b$ and $u_bu'_a$.

\smallskip
\noindent
Note first that $G$ is indeed bipartite since all vertices in $X_i^a$ and $K_a$ are only adjacent to vertices in $X_i^b$ and $K_b$ and vice versa. Further, $X_i^a$, $K_a$, $X_i^b$ and $K_b$ are all independent sets.
We show that $G$ has radius $3$ and diameter $4$. 
Consider the vertex $u_a \in V(K)$ connecting $K$ and $K'$. Since every vertex in the variable gadgets has a neighbour in $K_a$ or $K_b$, the distance from $u$ to any vertex of $G$ is at most~$3$ and thus, $G$ has radius~$3$.
Let $v \in V(X_i)$ and $v' \in V(X_j)$ for $i, j \in \{1, \dots, n\}$. Then, there is a path from $v$ to $v'$ of length at most~$4$ using their neighbours in $K$ which are at distance at most~$2$.
Similarly, for $v \in V(K)$ and $v' \in V(K')$, $v$ is adjacent to either $u_a$ or $u_b$ who are neighbours of $u_a'$ respectively $u_b'$. The distance from $v'$ to $u_a'$ and $u_b'$ is at most~$2$, resulting in a path of length at most~$4$.
This shows that $G$ has diameter~$4$.

\smallskip
\noindent
We claim that $(X,\cal C)$ is a yes-instance of \naesat{} if and only if $G$ has a $d$-cut.

\smallskip
\noindent
First suppose $(X, \cal C)$ is a yes-instance of \naesat. Then $X$ has a truth assignment $\phi$ that sets, in each clause $C_i$, at least one literal to true and at least one literal to false.
We colour a variable gadget $X_i$ red if the corresponding variable is set to true and blue otherwise.
We colour the vertices in $K$ red and those in $K'$ blue.
Consider a vertex $v$ in~$X_i$, say $v$ is coloured red. Since $v$ has the same colour as its variable gadget $X_i$ it has at most $d$ neighbours of the other colour which are either in $K$ or in $K'$.
Now consider a vertex~$C_{i_j}$ in~$K$, which is coloured red. As each clause contains two or three literals and $\phi$ sets at least one literal true, we find that $C_{i_j}$ is adjacent to at most $d$ blue vertices in the variable cliques.
The other cases follow by symmetry. Hence, we obtain a red-blue $d$-colouring of $G$. By Observation~\ref{o-cut-colouring} we get that $G$ has a $d$-cut.

Now suppose $G$ has a $d$-cut. By Observation~\ref{o-cut-colouring} this implies that $G$ has a red-blue $d$-colouring $c$. We may assume without loss of generality that $m \geq 2$, which implies that the partition classes of $K$ and $K'$ have size at least~$2d+1$ and thus,  $K$ and $K'$ are monochromatic.
Say $c$ colours every vertex of $K$ red. For a contradiction, assume $c$ colours every vertex of~$K'$ red as well. Then, in each variable gadget there is a vertex with $2d$~red neighbours so this vertex has to be coloured red. Since the partition classes of the variable gadgets have size~$3d$, the variable gadgets are monochromatic and thus, the whole graph is coloured red, a contradiction. We conclude that $c$ must colour every vertex of $K'$ blue.

Assume for another contradiction that there exists a clause $C_i$, for $i \in \{1,\dots, m\}$ such that all its corresponding variable gadgets are coloured blue. 
Then $C_{i_j}^a$ in $K_a$, for $j \in \{1,\dots, d\}$, which is coloured red,  has $d+1$ blue neighbours, a contradiction.
By symmetry,  we get the same contradiction for $C_{i_j}^b$ in~$K_b$ and $C_{i_j}'^a$ and $C_{i_j}'^b$ in~$K'$.
Hence, setting $x_i$ to true if $X_i$ is red in $G$  and to false otherwise gives us the desired truth assignment for $X$.
\end{proof}

 \begin{figure}[ht]
 \begin{center}
\scalebox{0.9}{
\begin{tikzpicture}
\begin{scope}[scale = 0.9]

\foreach \i in {0,..., 5}{
	\node[vertex, label= above: $x_{\i}$](x\i) at (2*\i, 3.5){};
}
\foreach \i in {0,..., 5}{
	\node[vertex, label= below: $y_{\i}$](y\i) at (2*\i, -2.5){};
}

%\node[vertex, label=above:$x_1$](x1) at (0,5){};
%\node[vertex, label=above:$x_2$](x2) at (2,5){};
%\node[vertex, label=above:$x_3$](x3) at (4,5){};
%\node[vertex, label=above:$x_4$](x4) at (6,5){};
%\node[vertex, label=above:$x_5$](x5) at (8,5){};
%\node[vertex, label=above:$x_6$](x6) at (10,5){};

\node[vertex, label={[label distance = -1mm]above left:$x_0^{S_1}$}](c11) at (0,1){};
\node[vertex, label={[label distance = -1mm]above left:$x_1^{S_1}$}](c12) at (1,1){};
\node[vertex, label={[label distance = -0.7mm]above:$x_3^{S_1}$}](c13) at (2,1){};

\node[vertex, label={[label distance = 1mm]above  :$x_1^{S_1}$}](c21) at (4,1){};
\node[vertex,  label={[label distance = 1mm]above:$x_3^{S_1}$}](c22) at (5,1){};
\node[vertex, label={[label distance = 1mm]above:$x_4^{S_1}$}](c23) at (6,1){};

\node[vertex, label={[label distance = -0.7mm]above :$x_2^{S_1}$}](c31) at (8,1){};
\node[vertex, label={[label distance = -1mm]above right:$x_4^{S_1}$}](c32) at (9,1){};
\node[vertex, label={[label distance = -1mm]above right:$x_5^{S_1}$}](c33) at (10,1){};

\node[vertex, label={[label distance = -1mm]below left:$x_0^{S_2}$}](d11) at (0,0){};
\node[vertex, label={[label distance = -1mm]below left:$x_1^{S_2}$}](d12) at (1,0){};
\node[vertex, label={[label distance = -0.7mm]below :$x_3^{S_2}$}](d13) at (2,0){};

\node[vertex, label={[label distance = 1mm]below:$x_1^{S_2}$}](d21) at (4,0){};
\node[vertex,  label={[label distance = 1mm]below :$x_3^{S_2}$}](d22) at (5,0){};
\node[vertex, label={[label distance = 1mm]below :$x_4^{S_2}$}](d23) at (6,0){};

\node[vertex, label={[label distance = -0.7mm]below :$x_2^{S_2}$}](d31) at (8,0){};
\node[vertex,  label={[label distance = -1mm]below right:$x_4^{S_2}$}](d32) at (9,0){};
\node[vertex, label={[label distance = -1mm]below right:$x_5^{S_2}$}](d33) at (10,0){};

\foreach \i in {1,2,3}{
	\foreach \j in {1,2,3}{
		\foreach \k in {1,2, 3}{
			\draw[edge](c\i\j)--(d\i\k);
		}	
	}
}

%\draw[edge](c11)--(c12);
%\draw[edge](c12)--(c13);
%\draw[edge](c11) -- (c13);
%
%\draw[edge](c21)--(c22);
%\draw[edge](c22)--(c23);
%\draw[edge](c21) -- (c23);
%
%\draw[edge](c31)--(c32);
%\draw[edge](c32)--(c33);
%\draw[edge](c31) -- (c33);
%
%
%

\draw[edge](c21) to (x1);%[bend left = 15]
\draw[edge](c22) to (x3);%[bend right = 15]
\draw[edge](c23)--(x4);

\draw[tredge](c11)--(x0);
\draw[tredge] (c12)to (x1);% [bend right = 15]
\draw[tredge](c13)--(x3);

\draw[tredge](c31)--(x2);
\draw[tredge](c32) to (x4);%[bend left = 26]
\draw[tredge](c33)--(x5);

\draw[edge](d21) to [bend left = 0](y1);
\draw[edge](d22) to (y3);%[bend left = 26]
\draw[edge](d23) to [bend right = 0](y4);

\draw[tredge](d11) to (y0); %[bend left = 37]
\draw[tredge](d12) to (y1); %[bend left = 27]
\draw[tredge](d13) to [bend left = 0](y3);

\draw[tredge](d31) to [bend left = 0](y2);
\draw[tredge](d32) to (y4); % [bend right = 30]
\draw[tredge](d33) to (y5); % [bend right = 35]

\draw[black] (-1,3) rectangle (11,4.5);
\draw[black] (-1,-2) rectangle (11,-3.5);

\node[] (t) at (-1.5, 3.75) {$X^1$};
\node[] (t) at (-1.5, -2.75) {$X^2$};

%\node[rectangle, minimum width = 160,  minimum height = 30, draw = black](r) at (5,5.1){};
\end{scope}
\end{tikzpicture}}
 \caption{The graph $G$ for variables $X = \set{x_1,\dots, x_6}$ and sets $\mathcal{S} = \set{\set{x_1,x_2,x_4}, \set{x_2, x_4, x_5}, \set{x_3, x_5, x_6}}$. %The vertices in the rectangle form a complete bipartite graph with partition classes $\{ x_0, \dots, x_5\}$ and $\{ y_0, \dots, y_5\}$.
  The set $\mathcal{S'} =\set{\set{x_1,x_2,x_4}, \set{x_3, x_5, x_6}}$ is an exact $3$-cover of $X$. The thick red edges in the graph show the corresponding matching cut of size~$6q=12$. $X^1$ and $X^2$ are the partition classes of a complete bipartite graph. For readability the edges between $X^1$ and $X^2$ are not represented. }\label{f-np-mmc-biprad}
 \end{center}
 \end{figure}
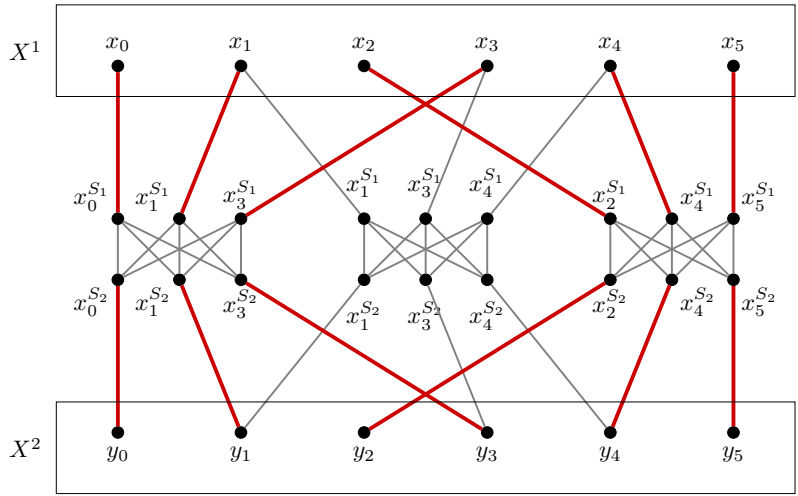

\noindent
In~\cite{LPR24} it has been shown that \mmc\ is \NP-hard for graphs of radius at most~$2$ and diameter at most~$3$.
Using the same method as before we can show a similar result for bipartite graphs.

The proof reduces from {\sc Exact $3$-Cover}.
We call a collection $\mathcal{C}$ of $3$-element subsets of~$X$ an \emph{exact $3$-cover} of $X$, if every $x \in X$ is contained in exactly one $3$-element subset of $\mathcal{C}$. In the problem {\sc Exact $3$-Cover} we are given as input a set $X=\{x_1,\ldots,x_{3\ell}\}$ of size $3\ell$ for some integer $\ell \geq 1$ and a collection ${\cal S}=\{S_1,\ldots,S_k\}$, where $S_i \subseteq X$ and $|S_i|= 3$, for all $i \in \{1,...,k\}$.
The question is whether $\mathcal{S}$ contains an exact $3$-cover of $X$. This problem is well known to be \NP-complete, see~\cite{Ka72}. 

\begin{theorem}\label{t-np-mmc-biprad}
\mmc\ is \NP-hard for bipartite graphs of radius at most~$3$ and diameter at most~$4$. 
\end{theorem}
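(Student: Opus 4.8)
The plan is to reduce from \etc\ and to bipartise the radius-$2$, diameter-$3$ construction of~\cite{LPR24} for \mmc\ by means of the doubling transformation described just before Theorem~\ref{t-dc-dc-biprad3}; the resulting graph is the one shown in Figure~\ref{f-np-mmc-biprad}. Given an instance $(X,\mathcal S)$ with $X=\{x_1,\dots,x_{3\ell}\}$ and $\mathcal S=\{S_1,\dots,S_k\}$ (we may assume $k\ge 1$), I would build $G$ as follows: take two copies $X^1=\{x_1^1,\dots,x_{3\ell}^1\}$ and $X^2=\{x_1^2,\dots,x_{3\ell}^2\}$ of the variable set and join them into a complete bipartite graph; for each set $S_i$ add a \emph{set gadget} $D_i$, namely a $K_{3,3}$ on the two vertex classes $\{x_j^{i,1}:x_j\in S_i\}$ and $\{x_j^{i,2}:x_j\in S_i\}$; and for every $x_j\in S_i$ add the two pendant edges $x_j^{i,1}x_j^1$ and $x_j^{i,2}x_j^2$. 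By Observation~\ref{o-cut-colouring} it then suffices to show that $G$ has a valid red--blue colouring of value at least $6\ell$ if and only if $(X,\mathcal S)$ admits an exact $3$-cover, and that $6\ell$ is the maximum attainable value.

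First I would record the routine structural facts. The graph $G$ is bipartite, with classes $X^1\cup\bigcup_i\{x_j^{i,2}\}$ and $X^2\cup\bigcup_i\{x_j^{i,1}\}$, and it is connected. Taking a vertex $x_1^1\in X^1$ as centre, every vertex of $X^2$ is at distance $1$, every other vertex of $X^1$ at distance $2$ (through $X^2$), every vertex $x_j^{i,2}$ at distance at most $2$ (through $x_j^2$), and every vertex $x_j^{i,1}$ at distance at most $3$ (through $x_j^1$); hence $\radius(G)\le 3$, and a short case analysis on pairs of gadget vertices gives $\diam(G)\le 4$.

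The core of the correctness proof is Observation~\ref{o-c-all-kr-mono} applied with $d=1$: both bicliques in sight have parts of size at least $3$, so the big complete bipartite graph on $X^1\cup X^2$ and every gadget $K_{3,3}$ are monochromatic in every valid red--blue colouring. Consequently, up to swapping the two colours, such a colouring colours all of $X^1\cup X^2$ red and then colours each gadget monochromatically; a gadget coloured blue makes exactly its six pendant edges bichromatic, a red gadget contributes none, and all remaining edges are monochromatic. Validity forces the blue gadgets to be pairwise disjoint as subsets of $X$ (two blue gadgets containing $x_j$ would give $x_j^1$ two blue neighbours), so the value of any valid colouring equals $6b$, where $b$ is the number of blue gadgets, and $6b\le 6\ell$ since $|X|=3\ell$; equality holds precisely when there are $\ell$ pairwise disjoint blue gadgets, i.e.\ an exact $3$-cover. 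Conversely, from an exact $3$-cover $\mathcal S'$ I colour $X^1\cup X^2$ red, the gadgets of $\mathcal S'$ blue and the remaining gadgets red; a direct check shows this colouring is valid (each variable vertex has exactly one blue neighbour, lying in its unique covering gadget, and each blue gadget vertex has exactly one red neighbour) and has value $6\ell$. Since \mmc\ is clearly in \NP, this proves the theorem.

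The construction and both directions are essentially forced once the doubling idea of Theorem~\ref{t-dc-dc-biprad3} is in hand, so I do not anticipate a conceptual obstacle. The points needing care are (i) checking the radius and diameter bounds given that each gadget vertex is attached to only a single variable vertex, and (ii) pinning down that $6\ell$ is exactly the optimum rather than merely a lower bound realised by covers; but (ii) follows cleanly from the enforced monochromaticity of the bicliques, so the distance bookkeeping in (i) is really the only thing one has to be careful about.
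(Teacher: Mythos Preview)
Your proposal is correct and is essentially the same as the paper's proof: the same reduction from \etc, the same doubling construction (a biclique on two copies of $X$ together with one $K_{3,3}$ per set, attached by single pendant edges), and the same correctness argument via Observation~\ref{o-c-all-kr-mono}. Your write-up is in fact slightly more explicit than the paper's in justifying that $6\ell$ is the exact optimum rather than merely attained by covers.
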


\begin{proof}
Let $(X,\mathcal{S})$ be an instance of {\sc Exact $3$-Cover} where $X=\{x_1,\ldots,x_{3q}\}$ and ${\cal S}=\{S_1,\ldots,S_k\}$, such that each $S_i$ contains exactly three elements of $X$.
From $(X,{\cal S})$ we construct a graph $G$ as follows. 

We first define a complete bipartite graph $K_{3q,3q}$ called $K_X$ with partition classes $X^1 = \{x_1,\ldots,x_{3q}\}$ and $X^2 = \{ y_1, \dots, y_{3q}\}$. For each $S\in {\cal S}$, we do as follows. 
Let $S=\{x_h,x_i,x_j\}$. We add a complete bipartite graph $K_{3,3}$ called $K_S$ with partition classes $S^1 = \{x^{S^1}_h, x^{S^1}_i, x^{S^1}_j\}$ and $S^2 = \{x^{S^2}_h, x^{S^2}_i, x^{S^2}_j\}$. We add an edge between a vertex $x_i\in X^1$ and a vertex $u \in S^1$ if and only if $u=x_i^{S^1}$ for some $S\in {\cal S}$.
Similarly, we add an edge between a vertex $y_i\in X^2$ and a vertex $u \in S^2$ if and only if $u=x_i^{S^2}$ for some $S\in {\cal S}$. %Further, we add edges $a^{S_i}_j b^{S_j}_i$ and $b^{S_i}_j a^{S_j}_i$ to connect the graphs $K_S$.
This completes the construction of $G$. See Figure~\ref{f-np-mmc-biprad} for an example.

First note that this graph is indeed bipartite. The partition classes are $X^1 \cup \bigcup_{1 \leq i \leq k} S_i^2$ and $X^2 \cup \bigcup_{1 \leq i \leq k} S_i^1$.
To see that $G$ has radius~$3$, pick a vertex $v$ in $X^1$. All vertices in $K_X$ have distance at most~$2$ to $v$.
Further, all vertices in some $K_S$, for $S \in \mathcal{S}$, are at distance at most $3$ to~$v$: if $u \in S^2$ go from $v \in X^1$ to $X^2$, then to $u$. Similarly, if $u \in S^1$ go from $v \in X^1$ to $X_2$, then to $S^2$ and then to $u$.

To see that $G$ has also diameter~$4$, it remains to show that the distance between $v \in S^1$ to a vertex in $K_{S'}, S' \in \mathcal{S}$ is at most~$4$. Note that the case where $v \in S^2$ follows by symmetry.  Let $v \in S^1$, for some $S \in \mathcal{S}$. First, note that all vertices in $K_S$ have distance at most $2$ to $v$.
Let $S' \in \mathcal{S}$ be different from $S$ and let $u \in S'^2$. Then there is a path of length $3$ from $v$ to $u$ using first the neighbour of $v$ in $X^1$ and then the neighbour of $u$ in~$X^2$ which are adjacent.
For $u \in S'^1$, there is a path of length $4$ from $v$ to $u$ using exactly one vertex of each of $X^1$, $X^2$, $S'^2$. 
Thus, the diameter of~$G$ is~$4$.

We claim that $\mathcal{S}$ contains an exact $3$-cover of $X$ if and only if $G$ has a matching cut of size~$6q$.
First suppose that $\mathcal{S}$ contains an exact $3$-cover $\mathcal{C}$ of $X$. We colour every vertex of $K_X$ red. We colour a graph $K_S$ blue if $S \in \mathcal{C}$ and red otherwise.
This yields a valid red-blue colouring of value $6q$, and thus a matching cut of size $6q$.

Now suppose that $G$ has a matching cut $M$ of size $6q$. 
As $K_X$ and $K_S$ for all $S \in \mathcal{S}$ are complete bipartite graphs with at least $3$ vertices on each side, they are monochromatic, see Observation~\ref{o-c-all-kr-mono}.
Say $K_X$ is coloured red. Thus, exactly $q$ subgraphs $K_S$ are coloured blue. Moreover, no two blue $K_S$ have a common red neighbour in $K_X$. Hence, the blue~$K_S$ correspond to an exact $3$-cover of $X$. 
\end{proof}

\noindent
We can also give the analogue result for \mdpm{}.
Note that no changes to the construction are needed to make the maximum matching cut extendable to a perfect matching. Since we have an exact $3$-cover,  if there is a matching cut in the graph, then all vertices in $K_X$ are matched. Further, each $K_S$ is either completely matched to $K_X$ or no vertex of $K_S$ has a matching partner in this matching cut. In this case, we can match every vertex in $K_S$ with another vertex from the same $K_S$, since $K_S$ is a complete bipartite graph with partition classes of equal size. 
Thus, the proof of Theorem~\ref{t-np-mmc-biprad} leads to the following theorem.
\begin{theorem}\label{t-np-mdpm-biprad}
\mdpm\ is \NP-hard for graphs of radius at most~$3$ and diameter at most~$4$. 
\end{theorem}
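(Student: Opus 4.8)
The plan is to recycle, essentially verbatim, the graph $G$ built in the proof of Theorem~\ref{t-np-mmc-biprad} from an instance $(X,\mathcal{S})$ of {\sc Exact $3$-Cover} with $X=\{x_1,\dots,x_{3q}\}$, together with the bipartiteness of $G$ and the bounds $\radius(G)\le 3$, $\diam(G)\le 4$ established there. Only the equivalence has to be revisited. Concretely, I would prove that $\mathcal{S}$ contains an exact $3$-cover of $X$ if and only if $G$ has a disconnected perfect matching of size $6q$; since (as observed in the proof of Theorem~\ref{t-np-mmc-biprad}) $6q$ is the largest possible size of any matching cut of $G$, this makes ``does the maximum disconnected perfect matching have size at least $6q$?'' a legitimate hardness target for \mdpm{}. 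The ``only if'' direction is then free: every disconnected perfect matching is in particular a matching cut, so a disconnected perfect matching of size $6q$ is a matching cut of size $6q$, and the argument in Theorem~\ref{t-np-mmc-biprad} shows the blue subgraphs $K_S$ form an exact $3$-cover of $X$.

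The one genuinely new point is the ``if'' direction. Starting from an exact $3$-cover $\mathcal{C}\subseteq\mathcal{S}$, I would take the valid red--blue colouring used in Theorem~\ref{t-np-mmc-biprad}: colour every vertex of $K_X$ red, colour $K_S$ blue for $S\in\mathcal{C}$ and red otherwise, and let $M$ be the set of bichromatic edges, so that $|M|=6q$ and, by Observation~\ref{o-cut-colouring}, $M$ is a matching cut. To see that $M$ extends to a perfect matching, observe that because $\mathcal{C}$ is an \emph{exact} cover, each vertex $x_i\in X^1$ is incident to exactly one edge of $M$ (to $x_i^{S^1}$, where $S$ is the unique member of $\mathcal{C}$ containing $x_i$), and likewise each $y_i\in X^2$; hence $M$ already saturates $V(K_X)$ and all of the blue $K_S$. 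The vertices not met by $M$ form precisely the disjoint union of the red subgraphs $K_S$ with $S\notin\mathcal{C}$, each of which is a copy of $K_{3,3}$ and so has a perfect matching $M_S$. Therefore $M\cup\bigcup_{S\notin\mathcal{C}}M_S$ is a perfect matching of $G$ containing $M$, so $M$ is a disconnected perfect matching of size $6q$.

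I do not expect a real obstacle here: the reduction, the bipartiteness, and the radius/diameter estimates are inherited unchanged from Theorem~\ref{t-np-mmc-biprad}, and the monochromaticity of $K_X$ and the $K_S$ that drives the ``only if'' direction comes from Observation~\ref{o-c-all-kr-mono} exactly as before. The only step that needs a word of care is making sure that $V(G)$ is exhausted by $V(K_X)$ and the $K_S$, so that adjoining an internal perfect matching to each red $K_S$ really does yield a perfect matching of all of $G$; once that is noted, the proof is a short addendum to the proof of Theorem~\ref{t-np-mmc-biprad}.
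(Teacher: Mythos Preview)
Your proposal is correct and follows essentially the same route as the paper: reuse the construction of Theorem~\ref{t-np-mmc-biprad} unchanged, and for the forward direction observe that the size-$6q$ matching cut coming from an exact $3$-cover saturates all of $K_X$ and the blue $K_S$, while each remaining (red) $K_S\cong K_{3,3}$ carries its own perfect matching. The paper's argument is slightly terser but makes exactly these points; your additional remark that $V(G)$ is exhausted by $V(K_X)$ together with the $K_S$'s is the right sanity check and is implicit in the paper.
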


\section{Conclusion}\label{s-con}
In this paper we investigated the complexity of matching cut variants on bipartite graphs of bounded radius and diameter. Our results lead to complexity dichotomies for \dc, \mmc{} and \mdpm. 
For \mc{} and \dpm{}, the dichotomy for bipartite graphs of bounded diameter was already complete. However, for bipartite graphs of bounded radius, few cases were missing to complete the dichotomy. We settled the case of \dpm{} on bipartite graphs of radius~$2$. Therefore, only the following two cases remain open.

\begin{open}
What is the computational complexity of \mc{} and \dpm{} for bipartite graphs of radius~$3$?
\end{open}

\noindent
For \pmc{} we gave the first result showing it is \NP-complete
for bipartite graphs of bounded radius and diameter. Only one
case remains open for the radius. The gap for the diameter is however still large.

\begin{open}
What is the computational complexity of \pmc{} for bipartite graphs of
	radius~$3$?
\end{open}

\begin{open}
What is the computational complexity of \pmc{} for bipartite graphs of diameter~$d$ with $4 \leq d \leq 9$?
\end{open}

\noindent
Especially settling the case of bipartite graphs of radius~$3$ would be interesting, since it is open for \mc, \dpm{} and \pmc. Therefore it is an interesting candidate to determine whether these problems differ in complexity on bipartite graphs of bounded radius and diameter.
\bibliography{ref}

\begin{thebibliography}{10}

\bibitem{BCD24}
Edouard Bonnet, Dibyayan Chakraborty, and Julien Duron.
\newblock Cutting barnette graphs perfectly is hard.
\newblock {\em Theoretical Computer Science}, page 114701, 2024.

\bibitem{Bo09}
Paul~S. Bonsma.
\newblock The complexity of the {M}atching-{C}ut problem for planar graphs and
  other graph classes.
\newblock {\em Journal of Graph Theory}, 62:109--126, 2009.

\bibitem{BP}
Valentin Bouquet and Christophe Picouleau.
\newblock The complexity of the {P}erfect {M}atching-{C}ut problem.
\newblock {\em Journal of Graph Theory}, to appear.

\bibitem{CHLLP21}
Chi{-}Yeh Chen, Sun{-}Yuan Hsieh, Ho{\`{a}}ng{-}Oanh Le, Van~Bang Le, and
  Sheng{-}Lung Peng.
\newblock Matching {C}ut in graphs with large minimum degree.
\newblock {\em Algorithmica}, 83:1238--1255, 2021.

\bibitem{Ch84}
Vasek Chv{\'{a}}tal.
\newblock Recognizing decomposable graphs.
\newblock {\em Journal of Graph Theory}, 8:51--53, 1984.

\bibitem{FLPR23}
Carl Feghali, Felicia Lucke, Dani\"el Paulusma, and Bernard Ries.
\newblock Matching cuts in graphs of high girth and {$H$}-free graphs.
\newblock {\em Proc.~ISAAC 2023, LIPIcs}, 283:28:1--28:16, 2023.

\bibitem{GS21}
Guilherme Gomes and Ignasi Sau.
\newblock Finding cuts of bounded degree: complexity, {F}{P}{T} and exact
  algorithms, and kernelization.
\newblock {\em Algorithmica}, 83:1677--1706, 2021.

\bibitem{Gr70}
Ronald~L. Graham.
\newblock On primitive graphs and optimal vertex assignments.
\newblock {\em Annals of the New York Academy of Sciences}, 175:170--186, 1970.

\bibitem{HT98}
Pinar Heggernes and Jan~Arne Telle.
\newblock Partitioning graphs into generalized dominating sets.
\newblock {\em Nordic Journal of Computing}, 5:128--142, 1998.

\bibitem{Ka72}
Richard~M. Karp.
\newblock Reducibility among {C}ombinatorial {P}roblems.
\newblock {\em Complexity of Computer Computations}, pages 85--103, 1972.

\bibitem{LL19}
Ho{\`{a}}ng-Oanh Le and Van~Bang Le.
\newblock A complexity dichotomy for {M}atching {C}ut in (bipartite) graphs of
  fixed diameter.
\newblock {\em Theoretical Computer Science}, 770:69--78, 2019.

\bibitem{LL23}
Ho{\`{a}}ng{-}Oanh Le and Van~Bang Le.
\newblock Complexity results for matching cut problems in graphs without long
  induced paths.
\newblock {\em Proc.~{WG} 2023, LNCS}, 14093:417--431, 2023.

\bibitem{LLPR24}
Van~Bang Le, Felicia Lucke, Daniël Paulusma, and Bernard Ries.
\newblock Maximizing matching cuts.
\newblock {\em Encyclopedia of Optimization}, to appear, 2025.

\bibitem{LT22}
Van~Bang Le and Jan~Arne Telle.
\newblock The {P}erfect {M}atching {C}ut problem revisited.
\newblock {\em Theoretical Computer Science}, 931:117--130, 2022.

\bibitem{LMPS24}
Felicia Lucke, Ali Momeni, Dani{\"{e}}l Paulusma, and Siani Smith.
\newblock Finding $d$-{C}uts in graphs of bounded diameter, graphs of bounded
  radius and {$H$}-free graphs.
\newblock {\em submitted manuscript}, 2024.

\bibitem{LPR22}
Felicia Lucke, Dani\"el Paulusma, and Bernard Ries.
\newblock On the complexity of {M}atching {C}ut for graphs of bounded radius
  and ${H}$-free graphs.
\newblock {\em Theoretical Computer Science}, 936, 2022.

\bibitem{LPR23a}
Felicia Lucke, Dani{\"{e}}l Paulusma, and Bernard Ries.
\newblock Finding matching cuts in ${H}$-free graphs.
\newblock {\em Algorithmica}, 85:3290--3322, 2023.

\bibitem{LPR24}
Felicia Lucke, Dani\"{e}l Paulusma, and Bernard Ries.
\newblock {Dichotomies for Maximum Matching Cut: {$H$}-Freeness, Bounded
  Diameter, Bounded Radius}.
\newblock {\em Theoretical Computer Science}, 1017, 2024.

\bibitem{Sc78}
Thomas~J. Schaefer.
\newblock The complexity of satisfiability problems.
\newblock {\em Proc.~STOC 1978}, pages 216--226, 1978.

\end{thebibliography}

\end{document}